\documentclass{article}

\usepackage{booktabs}       
\usepackage{amsfonts}       
\usepackage{nicefrac}       
\usepackage{microtype}      

\usepackage{fncylab,enumerate,wrapfig,placeins}
\usepackage{bbm}
\usepackage{graphicx}
\usepackage{enumitem}  
\usepackage{amsmath,amsthm,amsfonts,amssymb,xcolor}  
\usepackage{pifont}

\usepackage{geometry}
\usepackage{leftindex}

\pdfoutput=1  

\usepackage{hyperref}       
\usepackage{url}            

\usepackage{graphicx}  
\usepackage{relsize} 
\usepackage{accents}  

\usepackage{caption}
\usepackage{textgreek,upgreek,bm}

\usepackage{titlesec}

\newtheorem{theorem}{Theorem}[section]
\newtheorem{lemma}[theorem]{Lemma}

\newtheorem{corollary}[theorem]{Corollary}

\newlength{\noteWidth}
\long\def\notes#1{\ifinner
	{\footnotesize #1}
	\else 
	\marginpar{\parbox[t]{\noteWidth}{\raggedright\tiny#1}}  
	\fi\typeout{#1}}
	
\usepackage{cleveref}

\Crefname{corollary}{Corollary}{Corollaries}
\Crefname{eqnarray}{eq.}{eqs.}
\Crefname{equation}{eq.}{eqs.}

\Crefname{figure}{Fig.}{Figs.}
\Crefname{tabular}{Tab.}{Tabs.}
\Crefname{table}{Tab.}{Tabs.}
\Crefname{proposition}{Prop.}{Propositions}
\Crefname{theorem}{Thm.}{Thms.}
\Crefname{definition}{Def.}{Defs.} 
\Crefname{section}{Section}{Sections}
\Crefname{lemma}{Lemma}{Lemmas}
\Crefname{assumption}{Assumption}{Assumptions}

\def\haUppsi{\widehat{\Uppsi}}

\def\haXi{\widehat{\Xi}}
\def\haclL{\widehat{\clL}}

\def\urls#1{{\footnotesize\url{#1}}}

 \def\uPO{u^{  \text {\tiny \sf P}}}
 \def\PhiPO{\Phi^{  \text {\tiny \sf P}}}

  \def\zPO{z^{  \text {\tiny \sf P}}}

  \def\lambdaPO{\lambda^{  \text {\tiny \sf P}}}

\def\clD{\mathcal{D}}

\def\mindex#1{\index{#1}}

\DeclareFontFamily{U}{mathx}{\hyphenchar\font45}
\DeclareFontShape{U}{mathx}{m}{n}{<-> mathx10}{}
\DeclareSymbolFont{mathx}{U}{mathx}{m}{n}
\DeclareMathAccent{\widebar}{0}{mathx}{"73}

\def\Obj{\Upgamma}  

\newcommand{\bbblot}{\raise1pt\hbox{\vrule height .4ex width .4ex depth .05ex}}

\long\def\defbox#1{\framebox[.9\hsize][c]{\parbox{.85\hsize}{%
\parindent=0pt
\baselineskip=12pt plus .1pt      
\parskip=6pt plus 1.5pt minus 1pt 
 #1}}}

\long\def\beginbox#1\endbox{\subsection*{}%
\hbox{\hspace{.05\hsize}\defbox{\medskip#1\bigskip}}%
\subsection*{}}

\def\endbox{}

 \def\archival#1{} 

\def\FRAC#1#2#3{\genfrac{}{}{}{#1}{#2}{#3}}

\def\ddtp{{\mathchoice{\FRAC{1}{d^{\hbox to 2pt{\rm\tiny +\hss}}}{dt}}%
{\FRAC{1}{d^{\hbox to 2pt{\rm\tiny +\hss}}}{dt}}%
{\FRAC{3}{d^{\hbox to 2pt{\rm\tiny +\hss}}}{dt}}%
{\FRAC{3}{d^{\hbox to 2pt{\rm\tiny +\hss}}}{dt}}}}

\def\ddyp{{\mathchoice{\FRAC{1}{d^{\hbox to 2pt{\rm\tiny +\hss}}}{dy}}%
{\FRAC{1}{d^{\hbox to 2pt{\rm\tiny +\hss}}}{dy}}%
{\FRAC{3}{d^{\hbox to 2pt{\rm\tiny +\hss}}}{dy}}%
{\FRAC{3}{d^{\hbox to 2pt{\rm\tiny +\hss}}}{dy}}}}

\def\limsup{\mathop{\rm lim{\,}sup}}

\def\bfr{{\bf r}}

\def\bfmath#1{{\mathchoice{\mbox{\boldmath$#1$}}%
{\mbox{\boldmath$#1$}}%
{\mbox{\boldmath$\scriptstyle#1$}}%
{\mbox{\boldmath$\scriptscriptstyle#1$}}}}

\def\bfmY{\bfmath{Y}}

\def\bfmhhaY{\bfmath{\hhaY}} 
\def\bfmhhaY{\hbox to 0pt{$\widehat{\bfmY}$\hss}\widehat{\phantom{\raise 1.25pt\hbox{$\bfmY$}}}}

\def\hay{{\hat y}}

\def\tilu{\tilde u}
\def\tilz{\tilde z}

\def\clB{{\cal B}}

\def\clE{{\cal E}}
\def\clF{{\cal F}}
\def\clG{{\cal G}}
\def\clH{{\cal H}}

\def\clL{{\cal L}}
\def\clM{{\cal M}}

\def\clQ{{\cal Q}}
\def\clR{{\cal R}}

\def\clU{{\cal U}}

\def\clL{{\cal L}}

\def\barclL{\bar{\cal L}}

\def\eqdef{\mathbin{:=}}

\def\Expect{{\sf E}}

\def\Proj{\hbox{\sf Proj}}

 \def\epsy{\varepsilon}

\def\varble{\,\cdot\,}

\def\formtmp#1#2{{\vskip12pt\noindent\fboxsep=0pt\colorbox{#1}{\vbox{\vskip3pt\hbox to \textwidth{\hskip3pt\vbox{\raggedright\noindent\textbf{#2\vphantom{Qy}}}\hfill}\vspace*{3pt}}}\par\vskip2pt%
\noindent\kern0pt}}

\def\barb{{\overline {b}}}

\def\barpsi{{\bar{\psi}}}

{\end{list}}

\def\ass(#1:#2){(#1\ref{#1:#2})}

\def\ritem#1{
\item[{\sf \ass(\current_model:#1)}]
}

\newenvironment{recall-ass}[1]{%
\begin{description}
\def\current_model{#1}}{
\end{description}
}

\def\sq{\hbox{\rlap{$\sqcap$}$\sqcup$}}
\def\qed{\ifmmode\sq\else{\unskip\nobreak\hfil
\penalty50\hskip1em\null\nobreak\hfil\sq
\parfillskip=0pt\finalhyphendemerits=0\endgraf}\fi}

\newcommand{\blot}{\vrule height 1.1ex width .9ex depth -.1ex }
\def\qedb{\ifmmode\blot\else{\vspace{-.2cm}\unskip\nobreak\hfil
\penalty50\hskip1em\null\nobreak\hfil\blot
\parfillskip=0pt\finalhyphendemerits=0\endgraf}\fi}

\newcounter{rmnum}

\newcounter{anum}

\newcommand{\field}[1]{\mathbb{#1}}

\def\Re{\field{R}}

\def\Expect{{\sf E}}

\def\transpose{{\intercal}}

\def\epsy{\varepsilon}
\def\varble{\,\cdot\,}

\def\haY{\widehat{Y}}

\def\hhaY{\hbox to 0pt{$\haY$\hss}\widehat{\phantom{\raise 1.25pt\hbox{Y}}}}
 
\def\haXi{\widehat{\Xi}}

\def\haY{\widehat Y}

\newlength{\dhatheight}

\def\tilUppsi{\widetilde{\Uppsi}}

\def\barUppsi{\widebar{\Uppsi}}

\makeatletter
\newcommand\gobblepars{%
	\@ifnextchar\par%
	{\expandafter\gobblepars\@gobble}%
	{}}
\makeatother

\def\whamit#1{\smallbreak\pagebreak[3]%
	\noindent\textit{#1}\ \ \gobblepars}

\def\wham#1{\smallbreak\pagebreak[3]%
	\noindent\textbf{#1}\ \ \gobblepars}
	
\def\whamrm#1{\smallbreak\pagebreak[3]%
	\noindent{{\upshape\rm#1}}\ \ \gobblepars}

\title{Online Feedback Optimization for 
	Constrained Stochastic Problems 
	\\
	with Decision-Dependent Distributions: Extended Version
}

\author{Caio Kalil Lauand and Emiliano Dall'Anese
\thanks{C. K. Lauand is with the Division of Systems Engineering, Boston University, Boston, MA, USA. E. Dall'Anese is with the Department of Electrical and Computer Engineering and the Division of Systems Engineering, Boston University, Boston, MA, USA. Emails: {\tt\small cklauand@bu.edu},  {\tt\small  edallane@bu.edu}. This work was supported by the Division of Systems Engineering at Boston University and by the National Science Foundation Award 2504084.
 }%
 }%

\begin{document}

\maketitle

\begin{abstract}
Online feedback optimization (OFO) leverages real-time output 
measurements to  optimize the operation of networked systems without requiring full knowledge of system 
dynamics or disturbances. 
We develop an OFO approach for constrained stochastic optimization problems in which the distribution of the system's random parameters shifts in response to the control actions. We propose a projected primal-dual algorithm where the true dual constraint sets are replaced by surrogate sets. Our main result is an upper bound on the mean-square tracking error, which decomposes into four interpretable terms reflecting: (i) the stochasticity of the problem, (ii) output measurement errors, (iii) time-variability of the problem, and (iv) the mismatch between surrogate and true dual constraint sets. The theory is illustrated in a numerical experiment for power grids with price-responsive assets.
\end{abstract}

\clearpage
\tableofcontents

\clearpage
\section{Introduction}
	\label{s:Intro}
Online feedback optimization (OFO) provides a systematic approach to embed optimization algorithms within feedback controllers~\cite{brunner2012feedback,colombino2019online,berdal19,hauswirth2024optimization}. By leveraging real-time measurements, OFO methods steer system outputs toward solutions of optimization problems without requiring explicit knowledge of system dynamics or disturbances. In this paper, we study OFO in stochastic systems, modeled as algebraic maps, where the distribution of the uncertain parameters depends on the control input~\cite{druxia23,perzrnmenhar20,woobiadal23}. This setting arises in several engineering applications but poses significant analytical challenges.
   
Consider then the setting of a linear (or linearized) networked system described by
\begin{equation}
y_n = h^{(n)}(u_n,\Phi_n) \eqdef F u_n + G \Phi_n + D r_n ,
\label{e:model_lin}
\end{equation}
where $n$ denotes the discrete-time index, $y_n \in \mathbb{R}^m$ represents the system output, $u_n \in \mathbb{R}^d$ the control input, and $r_n \in \mathbb{R}^p$ an exogenous disturbance process. The superscript on $h$ indicates that the system map may vary with time; in particular, this time variation may arise from the disturbance sequence $\mathbf r := \{r_n\}$. For each time $n$, the random variable $\Phi_{n} \in \mathbb{R}^q$  has a distribution that depends on the current input $u_n$.

 We are interested in solving the following time-varying constrained optimization problem:
\begin{subequations}
\begin{align}
&\min_{u \in  \clU^{(n)}}\Expect_{\Phi \sim \clD(u)}[\Obj^{(n)}(u,\Phi)]
\\
& \text{sub. to: }
\Expect_{\Phi \sim \clD(u)}[\Xi^{(n)}_i(u,\Phi)] \leq 0
\, , \quad   i = 1, \cdots , M \, ,
\end{align}
\label{e:Opt_prob}
\end{subequations}
\noindent where $\clD(u)$ denotes the probability distribution of the random variable $\Phi$ induced by the input $u$. Moreover, $\Xi_i^{(n)}(u,\Phi) = ( g^{i,(n)}_\Xi \circ h^{(n)})(u,\Phi)$,
with $g^{i,(n)}_\Xi(y)$ a convex function with Lipschitz gradient for each $i \in \{1,\ldots,M\}$ and each $n$. The problem is time-varying in the sense that both the cost and the constraints may change at each time index $n$. The set $\clU^{(n)}$ represents the system's input constraints (e.g., physical or engineering constraints) at time $n$, $\Obj^{(n)}$ represents the system's performance objective, and the functions $\Xi_i^{(n)}(u,\Phi)$ represent time-varying constraints on the system's output.


The objective function $\Obj$ quantifies performance in terms of outputs and inputs of the system via the additive model $\Obj^{(n)}(u,\Phi) =  \Obj_y^{(n)}(u,\Phi) +  \Obj_u^{(n)}(u,\Phi)$. Specifically, $\Obj_y^{(n)}(u,\Phi) = (g^{(n)}_y \circ h^{(n)})(u,\Phi)$, where $g^{(n)}_y(y)$ measures performance in terms of the system's output. Performance in terms of the system's controllable inputs is measured through $\Obj_u^{(n)}(u,\Phi)= g_u^{(n)}(u,\Phi)$. Throughout the paper we consider the convex optimization case, where $g_y$ is  convex with Lipschitz gradient. Moreover, $g_u$ is assumed  convex in its first argument with Lipschitz gradient.

A classical approach to solving \eqref{e:Opt_prob} is to estimate the sequence $\{z^*_n\} \eqdef \{ (u^*_n, \lambda^*_n)^\transpose \}$ of saddle points of the expected regularized Lagrangian
\begin{equation}
\max_{\lambda \in \Lambda^{(n)}} \min_{u\in  \clU^{(n)}} 
\Expect_{\Phi \sim \clD(u)}[\clL^{(n)}(u,\lambda,\Phi)]
\label{e:minmax_prob}
\end{equation}
in which $\clL^{(n)}(u,\lambda,\Phi)  = \Obj^{(n)}(u,\Phi) + \lambda^\transpose \Xi^{(n)}(u,\Phi) + \tfrac{1}{2}\mu \| u \|^2
 - \tfrac{1}{2}\eta \| \lambda \|^2$,
for constants $\mu,\eta>0$, and $\{ \Lambda^{(n)} \}$ denotes the sequence of dual constraint sets. Here we denote $\Xi^{(n)}(u,\Phi)=
 \begin{bmatrix}
     \Xi_1^{(n)}(u,\Phi)
& \cdots &
      \Xi_M^{(n)}(u,\Phi)
\end{bmatrix}^\transpose$.

Obtaining estimates for the saddle points of the above 
problem may be infeasible due to several challenges:  
(i) knowledge about the disturbance process $\bfr$ or the 
exact system model \eqref{e:model_lin} is not 
available in typical engineering applications;  
(ii) for arbitrary distributional maps $\clD$, the expected 
Lagrangian is not guaranteed to be convex--concave 
\cite{woobiadal23}, and even under structural assumptions 
on $\clD$ that restore convexity, the map itself is 
typically unknown in practice; and  
(iii) the optimal dual variables depend on the distributional 
map, making it infeasible to determine a priori the 
sequence of dual constraint sets that contain them.

Solutions for tackling issue (i) have been previously studied within the OFO literature mostly in deterministic settings \cite{bercomchewan23,berdal19,nonmul21}, with the exception of \cite{lauber25} which considered a stochastic plant with i.i.d.\ noise. A common OFO approach to estimating $\{z^*_n\}$ in such deterministic settings is the approximate projected primal-dual scheme in \cite{berdal19}:
\begin{align}
z_{n+1} = \Proj_{\clU^{(n)} \times \Lambda^{(n)}}
		\{ z_n 
		- \alpha \nabla\haUppsi^{(n)}  \},
\end{align}
where $\alpha>0$ is the step-size and $\nabla\haUppsi^{(n)}$ is an approximation of $\nabla \Uppsi^{(n)}  (z, \Phi) =
\begin{bmatrix}
\nabla_u \clL^{(n)}(z,\Phi) &
-\nabla_\lambda \clL^{(n)}(z,\Phi)
\end{bmatrix}^\transpose$ obtained by replacing the true output 
$y_n = h^{(n)}(u_n, \Phi_n)$ with real-time measurements 
$\hay_n$.


As argued in \cite{lauber25}, the projected primal–dual method is expected to succeed even in simple stochastic settings such as the i.i.d.\ setting considered therein, given the extensive literature supporting stochastic approximation algorithms such as stochastic gradient descent (SGD). Unfortunately, this optimistic picture no longer holds in the presence of decision-dependent stochasticity, in view of challenges (ii) and (iii).

To address the challenge related to the distributional map, which is unknown, we follow a common approach employed in stochastic optimization with decision-dependent distributions 
\cite{druxia23,perzrnmenhar20,woobiadal23} and focus on estimating the performatively stable points of $\{\clL^{(n)}\}$. These correspond to decisions that are optimal with respect to the distribution they induce. Formally, we seek the sequence $\{\zPO_n\} \eqdef \{ (\uPO_n, \lambdaPO_n)^\transpose\}$ satisfying
\[
(\uPO_n, \lambdaPO_n) = \arg \max_{\lambda \in \Lambda^{(n)}} \min_{u \in  \clU^{(n)}} 
\Expect_{\PhiPO_{n} \sim \clD(\uPO_n)}[\clL^{(n)}(u,\lambda,\PhiPO_{n})].
\]
The sequence $\{\zPO_n\}$ is generally distinct from the sequence 
$\{z_n^*\}$ of performative optima. Indeed, if 
$\bar J^{(n)}(u;u'):=\mathbb E_{\Phi\sim\mathcal D(u')}
[J^{(n)}(u,\Phi)]$, then $u_n^*$ minimizes $\bar J^{(n)}(u;u)$, 
whereas $\uPO_n$ satisfies the fixed-point condition 
$\uPO_n\in\arg\min_u \bar J^{(n)}(u;\uPO_n)$. These notions coincide 
only under additional structural conditions.

Moreover, since the dual constraint sets are not known in our setting, we propose the implementation of the following scheme for estimating $\{\zPO_n\}$:
\begin{equation}
z_{n+1} = \Proj_{\clU^{(n)} \times \clH^{(n)}}
\{ z_n - \alpha \nabla\haUppsi^{(n)}  \}
\label{e:PSGD_noreg_online}
\end{equation}
in which $\{\clH^{(n)}\}$ is a surrogate sequence of constraint sets which might not necessarily contain the performatively stable dual variables.

\wham{Contributions:}
This paper extends the OFO framework to constrained stochastic optimization problems with decision-dependent distributions. We simultaneously address (A) decision-dependent stochasticity, (B) measurement-based gradient approximations, and (C) unknown dual constraint sets. We analyze the projected primal–dual algorithm \eqref{e:PSGD_noreg_online} with surrogate dual constraint sets and establish the following steady-state bound on the mean-square error relative to the sequence of performatively stable saddle points:
\begin{align}
\limsup_{N \to \infty} \Expect[\| z_N - \zPO_N \|^2]
&\leq 
\alpha  \rho_a + \rho_b + \frac{1}{\alpha}\Big[\rho_c + b_\circ  \frac{\rho_c}{\alpha} +\rho_d \Big]
+  \frac{b_\circ }{\alpha^{3/2}}  \sqrt{\rho_c\Big[ \alpha \rho_a + \rho_b+ \frac{\rho_d}{\alpha} \Big]} .  \hspace{-.2cm} \label{e:MSE_lim}
\end{align}
Here, $b_\circ$ is a constant and the error terms have the following interpretation: $\rho_a$ captures the error related to the stochasticity of the problem; $\rho_b$ corresponds to the error resulting from approximating $\{\nabla \Uppsi^{(n)}\}$ using measurements $\{ \hay_n  \}$; $\rho_c$ captures the error due to the time variability of $\clL$; and $\rho_d$ quantifies the error associated with the surrogate constraint sets $\{ \clH^{(n)}  \}$.

In particular, if $\Phi_0 \equiv \Phi_n$ for all $n$, then $\rho_a = 0$ and \eqref{e:MSE_lim} reduces to a refinement of \cite[Thm.~4]{berdal19} with the inclusion of an error term associated with the use of surrogate dual sets. Moreover, $\rho_d = 0$ if and only if $\Lambda^{(n)} \subseteq \clH^{(n)}$, that is, when the surrogate dual set contains the performatively stable dual variables. If, on the other hand, $\rho_b = \rho_c = \rho_d = 0$, corresponding to a static optimization problem with no measurement errors and no surrogate dual sets, then \eqref{e:MSE_lim} becomes $O(\alpha)$, consistent with rates from the SGD literature.

\wham{Relevant Literature:} \emph{Online Feedback Optimization.} Although OFO methods 
are broadly applicable to networked systems, most 
advancements in the field have been motivated by applications 
to power systems 
\cite{berdal19,lauber25,hauswirth2024optimization}. We refer 
the reader to \cite{hauswirth2024optimization} for a broader 
survey of OFO. As mentioned after \eqref{e:MSE_lim}, the 
work \cite{berdal19} incorporates primal-dual algorithms 
into the OFO framework in a purely deterministic setting with 
known dual constraint sets. The work \cite{lauber25} extends 
OFO to a stochastic setting and obtains a decomposable MSE 
bound similar to \eqref{e:MSE_lim}, but considers only 
unconstrained problems without decision-dependent 
distributions.

\emph{Optimization with Decision-Dependent Distributions.} 
Within the literature on stochastic optimization with 
decision-dependent distributions (or performative 
prediction), our work is most closely related to 
\cite{druxia23,woobiadal23,NEURIPS2020_33e75ff0,he2025decision}, 
all of which consider static problems with no measurement 
noise. In \cite{woobiadal23}, stochastic 
saddle point problems with decision-dependent distributions are studied with known dual constraint sets, proving existence of 
performatively stable points and obtaining step-size 
independent MSE bounds. The bounds in 
\cite{NEURIPS2020_33e75ff0} are step-size-dependent and 
consistent with the SGD literature, 
but are restricted to unconstrained problems. Convergence guarantees for proximal and clipped gradient  methods are 
obtained in \cite{druxia23}, also in the unconstrained 
setting. The reference \cite{he2025decision} tackles the 
non-convex setting and models distribution shifts as a 
dynamical process, but does not consider constraints. Finally, an approach to online convex optimization with stochastic constraints is studied in \cite{yuneewei17}, but this paper only considers decision-independent stochasticity.

\section{Main Results}
\label{s:main}

\subsection{Preliminaries}
\label{s:assump}

\wham{Notation:}
We use $\|\varble\|$ to denote the Euclidean norm for vectors and the induced operator norm for matrices. For a random variable $X$ (vector- or matrix-valued), we denote its $L_p$ moment by $\|X\|_p^p \eqdef \Expect[\|X\|^p]$,
and the conditional moment with respect to the filtration $\clF_n$ by $\|X\|_{p,n}^p \eqdef \Expect[\|X\|^p \mid \clF_n]$. Here the filtration is generated by the history of the algorithm \eqref{e:PSGD_noreg_online}: $\clF_n \eqdef \{ z_0, \Phi_1, \Phi_2, \dots, \Phi_{n-1} \}$. For brevity, we write $\Phi_n, \PhiPO_n$ to denote random variables distributed according to $\clD(u_n)$ and $\clD(\uPO_n)$, respectively, and $\nabla \barUppsi^{(n)}(z; u) = \Expect_{\Phi \sim \clD(u)} [\nabla\Uppsi^{(n)}(z,\Phi)]$. 

We start by listing the main assumptions.

\wham{(A1)}
The function $g^{(n)}_y$ is convex and continuously differentiable, while $g^{(n)}_u$ is convex and continuously differentiable in its first argument. Moreover, $g^{i,(n)}_\Xi$ is convex for each $n$ and $1 \le i \le M$. Their gradients are Lipschitz continuous: there exists $L_g < \infty$ such that for all $n$, $i$, and $y,y' \in \Re^m$,
\begin{small}
\[
\begin{aligned}
\| \nabla g^{(n)}_y(y) - \nabla g^{(n)}_y(y') \|
&\le L_g \|y-y'\|, \\
\| \nabla_u g^{(n)}_u(u,\Phi) - \nabla_u g^{(n)}_u(u',\Phi') \|
&\le L_g \big(\|u-u'\| + \|\Phi-\Phi'\|\big), \\
\| \nabla g^{i,(n)}_\Xi(y) - \nabla g^{i,(n)}_\Xi(y') \|
&\le L_g \|y-y'\|.
\end{aligned}
\]
\end{small}

\wham{(A2)}
For each $n$, the set $\clU^{(n)} \subset \Re^d$ is convex and compact. Define $b_\clU^{(n)} \eqdef \sup_{u \in \clU^{(n)}} \|u\|$. The surrogate dual set is

\[
\clH^{(n)} \eqdef \left\{ \lambda \in \Re_+^M : \sum_{i=1}^M \lambda_i \le b_\clH^{(n)} \right\}
\]
for some $b_\clH^{(n)} > 0$. The sequences $\{\clU^{(n)}\}$ and $\{\clH^{(n)}\}$ are uniformly bounded: $\barb_\clU \eqdef \sup_{n\ge1} b_\clU^{(n)}$, $\bar b_\clH \eqdef \sup_{n\ge1} b_\clH^{(n)}$,
and we define $b^{\clU,\clH} \eqdef \max\{\barb_\clU,\barb_\clH\}$.

\wham{(A3)} For any $u \in \Re^d$ and a stationary distribution $\uppi$ independent of $u$, $\Phi \overset{d}{=} \nu(u,\gamma)$,
 where $\gamma \sim \uppi$. Moreover, there exist constants $\sigma_\Delta, L_\nu < \infty$ such that for any $u,u' \in \Re^d$, 
 \begin{equation}
 \Expect[\| \nu(u,\gamma) - \nu(u',\gamma)\|^2] 
 \leq L_\nu^2 \| u-u'\|^2 .
 \label{e:Phi_Lip}
 \end{equation} 
 $\Expect[\|\nu(u,\gamma)\|^2] \le \sigma^2_\Delta$,  and $\sup_n \|r_n\| \le \sigma_\Delta$.
\wham{(A4)}
There exists $\epsy_m < \infty$ such that $\Expect[\|\hat y_n - y_n\|^2 \mid \clF_n] \le \epsy_m^2$ for all $n \ge 0$.

\wham{(A5)}
Slater’s constraint qualification holds for the optimization problem \eqref{e:Opt_prob} at each time instant $n$.

Assumptions (A1), (A2), and (A5) are standard in constrained stochastic optimization, while (A4) is common in the OFO framework \cite{berdal19,bercomchewan23}.  Moreover, Assumption (A3)  implies that the distributional map $\clD$ is Lipschitz in both the Wasserstein-2 and 1 metrics with constant $L_\nu$. This is a mild strengthening of the Wasserstein-1 condition in \cite{woobiadal23}, reflecting our second-moment analysis framework. No additional structural assumptions on $\uppi$ are required; the 
distribution $\uppi$ enters the analysis only through the  
second-moment and mean-square Lipschitz bounds.


\begin{figure*}
		\centering
		\includegraphics[width = .95\textwidth]{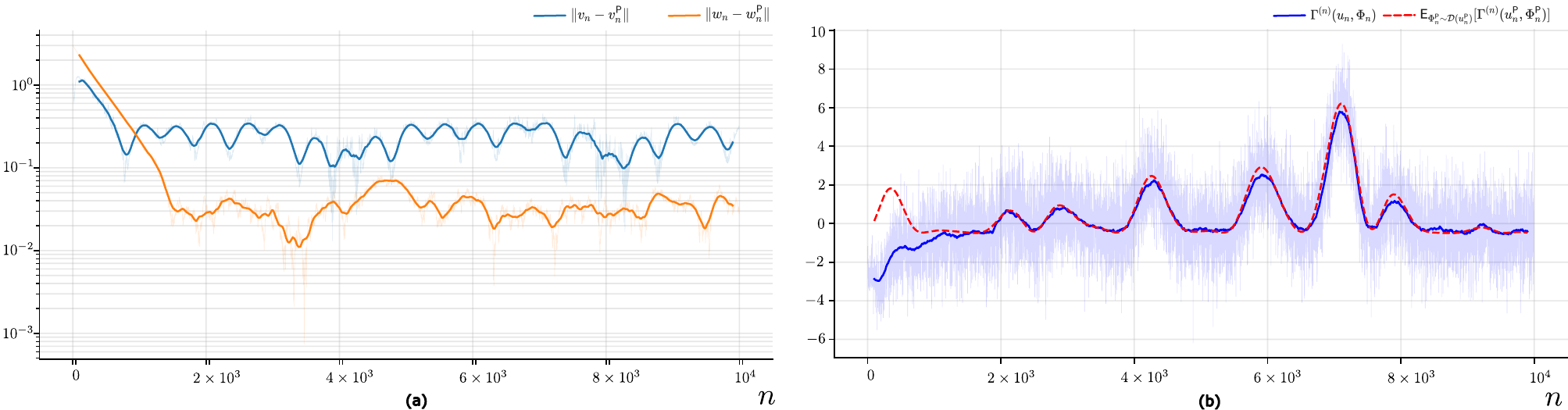}
		\caption{(a) Norm of the deviation between $\{u_n\}$ and $\{ \uPO_n \}$ as a function of $n$; (b) Evolution of $\{\Obj^{(n)} (u_n, \Phi_n) \}$. Transparent curves display 
instantaneous values, while opaque lines represent a moving 
average over a window of $200$ iterations.}
		\label{fig:Tracking_perf}
        \vspace{-.4cm}
	\end{figure*}

\subsection{Analysis of the Stochastic Primal-Dual Algorithm}

Before presenting the main results, we establish key properties of the operator $\nabla \Uppsi$ and discuss the relationship between optimal and performatively stable solutions.

\begin{lemma}
\label[lemma]{t:facts}
Suppose that (A1)--(A3) hold. Then,

\whamrm{(i)}$\nabla \Uppsi^{(n)}$ is Lipschitz continuous in quadratic mean in its first variable: there is $L_\Uppsi<\infty$ such that for each $n$ and all $z,z' \in \clU^{(n)} \times \clH^{(n)}$ that are $\clF_n$-measurable with $\Phi \sim \clD(u)$ and $\Phi' \sim \clD(u')$,
		$
	\Expect[\|\nabla \Uppsi^{(n)}(z,\Phi)   -  \nabla \Uppsi^{(n)}(z',\Phi')   \|^2 \mid \clF_n]
		\leq 
		L_\Uppsi^2 (\| z - z' \|^2 + \| \Phi - \Phi' \|_{2,n}^2)
		$.

\whamrm{(ii)} $\nabla \barUppsi^{(n)}$ is almost surely strongly monotone in its first variable: there is $\mu_\Uppsi>0$ such that for each $n$, all $z,z' \in \clU^{(n)} \times \clH^{(n)}$, and all $u \in \clU^{(n)}$,
	 	$
	 		[
	 	\nabla\barUppsi^{(n)}(z;u)
        - \nabla\barUppsi^{(n)}(z';u)  ]
	 	^\transpose 
	 	(z - z') 
	 	\geq 
	 	\mu_\Uppsi \| z -z'   \|^2
	 	$.

        \whamrm{(iii)} There exists a constant $\sigma_\Uppsi < \infty$ such that the following holds for each $n$:
    	 	$
	 	\Expect[\|  \nabla \Uppsi^{(n)}(z_n,\Phi_{n}) -
         \Expect[ \nabla \Uppsi^{(n)}(z_n,\Phi_{n})]
        \|^2 \mid \clF_n]
	 	\leq 
	 	\sigma^2_\Uppsi
	 	$.

        \whamrm{(iv)}$\nabla \barUppsi^{(n)}$ is Lipschitz continuous in  its second variable: for each $n$, all $z \in \clU^{(n)} \times \clH^{(n)}$, and all $u,u' \in \clU^{(n)}$, $\|\nabla\barUppsi^{(n)}(z;u) - \nabla\barUppsi^{(n)}(z;u')\|^2 \leq L_\Uppsi^2 L_{\nu}^2 \| u -u'\|^2$.
        in which $L_\Uppsi$ is defined in part (i) and $L_\nu$ is the constant in \eqref{e:Phi_Lip}.
\end{lemma}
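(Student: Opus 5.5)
I will first write out $\nabla\Uppsi^{(n)}$ explicitly and then check each part against (A1)--(A3). Using the linear model \eqref{e:model_lin} and the chain rule,
\[
\nabla_u \clL^{(n)}(z,\Phi) = F^\transpose \nabla g_y^{(n)}(h^{(n)}(u,\Phi)) + \nabla_u g_u^{(n)}(u,\Phi) + \sum_{i=1}^M \lambda_i F^\transpose \nabla g_\Xi^{i,(n)}(h^{(n)}(u,\Phi)) + \mu u ,
\]
\[
-\nabla_\lambda \clL^{(n)}(z,\Phi) = -\Xi^{(n)}(u,\Phi) + \eta\lambda .
\]

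\textbf{Part (i).} I take the difference of these expressions at $(z,\Phi)$ and $(z',\Phi')$. For the $g_y$ and $g_u$ terms I use (A1) together with $\|h^{(n)}(u,\Phi)-h^{(n)}(u',\Phi')\|\le \|F\|\|u-u'\|+\|G\|\|\Phi-\Phi'\|$; the $r_n$ term cancels. The bilinear term $\lambda_i\nabla g_\Xi^i$ is split as $\lambda_i[\nabla g^i_\Xi(y)-\nabla g^i_\Xi(y')] + (\lambda_i-\lambda_i')\nabla g^i_\Xi(y')$. The first piece is controlled by $\barb_\clH L_g\|y-y'\|$ through (A2). The second piece needs a bound on $\|\nabla g^i_\Xi(y')\|$, which is at most $\|\nabla g^i_\Xi(0)\| + L_g\|y'\|$. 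Its conditional second moment is finite by (A3), since $\|y'\|\le \|F\|\barb_\clU+\|G\|\|\Phi'\|+\|D\|\sigma_\Delta$ and $\Expect\|\Phi'\|^2\le\sigma_\Delta^2$. I will assume $\sup_n\|\nabla g^{i,(n)}_\Xi(0)\|$ and similar offsets are finite, as is implicitly needed.

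This second piece is the main obstacle. The product $(\lambda-\lambda')\cdot\nabla g(y')$ has a random factor, so a pure Lipschitz bound requires either bounding $\Expect[\|\nabla g(y')\|^2\mid\clF_n]$ by a constant or using $\clF_n$-measurability of $z,z'$ to pull $\|\lambda-\lambda'\|^2$ outside the conditional expectation. That is exactly why the statement demands measurability. With $\|\lambda-\lambda'\|$ pulled out, the piece contributes at most $C\|\lambda-\lambda'\|^2$.

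The dual component $\Xi^{(n)}(u,\Phi)-\Xi^{(n)}(u',\Phi')$ is handled by the mean value theorem. Gradients of $g_\Xi^i$ along the segment are bounded by $\|\nabla g^i_\Xi(0)\|+L_g\max(\|y\|,\|y'\|)$, so I obtain a bound of the form $(\text{random factor})\cdot\|y-y'\|$. Getting a clean $L^2$ bound here also needs care. I would use Cauchy--Schwarz, or a uniform bound on outputs; the latter is available if one restricts to the coupling $\Phi=\nu(u,\gamma)$, $\Phi'=\nu(u',\gamma)$ with $\|y\|$ bounded in $L^2$. I would state any needed fourth-moment bound as an implicit constant absorbed into $L_\Uppsi$.

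Finally I square, use $(a_1+\dots+a_k)^2\le k\sum a_j^2$, and take $\Expect[\cdot\mid\clF_n]$ to get $L_\Uppsi^2(\|z-z'\|^2+\|\Phi-\Phi'\|^2_{2,n})$.

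\textbf{Part (ii).} For fixed $u$ the distribution of $\Phi$ is frozen, so $\nabla\barUppsi^{(n)}(\cdot;u)$ is the saddle operator of $\Expect_{\Phi\sim\clD(u)}[\clL^{(n)}(\cdot,\cdot,\Phi)]$. By (A1) and the linear dependence of $h$ on $u$, the expected Lagrangian is convex in the primal variable: for $\lambda\ge0$, each composition $g\circ h$ is convex, and expectation preserves convexity. It is also linear in $\lambda$ before regularization. The standard monotonicity of the saddle operator of a convex--concave function then gives $\ge0$ for the unregularized part. The regularizers $\tfrac12\mu\|u\|^2$ and $-\tfrac12\eta\|\lambda\|^2$ add $\mu\|u-u'\|^2+\eta\|\lambda-\lambda'\|^2$, so I can take $\mu_\Uppsi=\min\{\mu,\eta\}$.

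\textbf{Part (iii).} The variance is at most $\Expect[\|\nabla\Uppsi^{(n)}(z_n,\Phi_n)-\nabla\Uppsi^{(n)}(z_n,\Phi^\circ)\|^2\mid\clF_n]$ for a fixed reference $\Phi^\circ=0$. Applying part (i) with $z=z'=z_n$, this is bounded by $L_\Uppsi^2\Expect\|\Phi_n\|^2$. By (A3), $\Expect\|\Phi_n\|^2\le\sigma_\Delta^2$, so I set $\sigma_\Uppsi=L_\Uppsi\sigma_\Delta$.

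\textbf{Part (iv).} I couple $\Phi=\nu(u,\gamma)$ and $\Phi'=\nu(u',\gamma)$ with the same $\gamma\sim\uppi$. Jensen's inequality gives
\[
\|\nabla\barUppsi^{(n)}(z;u)-\nabla\barUppsi^{(n)}(z;u')\|^2\le \Expect\|\nabla\Uppsi^{(n)}(z,\Phi)-\nabla\Uppsi^{(n)}(z,\Phi')\|^2 .
\]
Part (i) with $z=z'$ bounds the right-hand side by $L_\Uppsi^2\Expect\|\Phi-\Phi'\|^2$, and \eqref{e:Phi_Lip} gives $\Expect\|\Phi-\Phi'\|^2\le L_\nu^2\|u-u'\|^2$, which is the claim.
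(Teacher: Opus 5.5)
Your plan for (i), (ii) and (iv) follows the paper. Your blockwise estimates for (i), including the split of $\lambda_i\nabla g^{i,(n)}_\Xi$ and the use of $\clF_n$-measurability of $z,z'$, are what the paper packages as \Cref{t:facts_f1} before appealing to a standard saddle-operator lemma. Part (ii) is the same convex--concave-plus-regularizer argument with $\mu_\Uppsi=\min\{\mu,\eta\}$. Part (iv) is the same chain: common coupling, Jensen, (i) with $z=z'$, and \eqref{e:Phi_Lip}.

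For (iii) your route is different and shorter. The paper bounds each block's conditional second moment separately (\Cref{t:facts_f3}), giving $\sigma_\Uppsi=(M+2)\sigma_\clL+2\max\{\mu,\eta\}b^{\clU,\clH}$. You instead compare with the $\clF_n$-measurable reference $\nabla\Uppsi^{(n)}(z_n,0)$ and get $\sigma_\Uppsi=L_\Uppsi\sigma_\Delta$. Strictly, $\Phi'=0$ is not distributed as $\clD(u')$, so you are reusing the pointwise $\Phi$-Lipschitz estimate from your proof of (i), not (i) as stated.

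The genuine gap is the one you flagged in the dual row, and your patch does not close it. The mean value theorem gives $|\Xi^{(n)}_i(u,\Phi)-\Xi^{(n)}_i(u',\Phi')|\le\Gamma\|y-y'\|$, where the random factor $\Gamma$ grows like $\max\{\|y\|,\|y'\|\}$. Since (A3) controls only second moments, $\Expect[\Gamma^2\|y-y'\|^2\mid\clF_n]$ need not be finite. Cauchy--Schwarz needs fourth moments of $\Phi$, which are not assumed, and even then yields $\|\Phi-\Phi'\|_{4,n}$ rather than $\|\Phi-\Phi'\|_{2,n}$. Also, ``$\|y\|$ bounded in $L^2$'' is not a uniform bound.

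In fact no argument closes this step from (A1)--(A3) alone. Take scalar $y=\Phi$ ($F=0$, $G=1$, $D=0$), $g_y=g_u=0$, $M=1$, $g_\Xi(y)=y^2/2$, $\clU^{(n)}=[0,1]$, and $\nu(u,\gamma)=(2+\sin u)\gamma$ with $\gamma$ Student-$t$ with three degrees of freedom, so $\Expect\gamma^2=3$ and $\Expect\gamma^4=\infty$. Then (A1)--(A3) hold with $L_g=1$, $L_\nu^2=3$, $\sigma_\Delta^2=27$, and $\nabla\Uppsi^{(n)}(z,\Phi)=(\mu u,\ \eta\lambda-\Phi^2/2)$. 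For $u=0$, $u'=1$ and equal $\lambda$:
\begin{itemize}
\item the left side of (i) is $\mu^2+\tfrac14[(2+\sin 1)^2-4]^2\Expect\gamma^4=\infty$, while $\|\Phi-\Phi'\|_{2,n}^2=3\sin^2 1$;
\item the conditional variance in (iii) is $\tfrac14(2+\sin u_n)^4\Expect[(\gamma^2-3)^2]=\infty$.
\end{itemize}

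The paper's proof has the same hole. \Cref{t:facts_f1}~(iii) passes from second-moment bounds on $\nabla\Xi^{(n)}_i$ to an $L^2$-Lipschitz bound ``by the mean value theorem,'' and the paper's (iii) inherits this through \Cref{t:facts_f3}~(iii).

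To make your argument rigorous you must add a hypothesis. Two options:
\begin{itemize}
\item uniformly bounded $\nabla g^{i,(n)}_\Xi$;
\item $\|\nu(u,\gamma)\|\le\sigma_\Delta$ almost surely, so outputs are a.s.\ bounded on $\clU^{(n)}$.
\end{itemize}
Under either, your mean-value factor is a deterministic constant, and (i) and (iii) go through as you wrote them. Alternatively, fourth moments plus an $L^4$ analogue of \eqref{e:Phi_Lip} suffice, with $\|\Phi-\Phi'\|_{2,n}$ in (i) replaced by $\|u-u'\|$.

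Part (iv) survives without any of this, though not with the constant $L_\Uppsi L_\nu$. Take the first-order expansion in expectation:
\[
|\Expect[g(y)-g(y')]|\le\|\nabla g(y')\|_2\|y-y'\|_2+\tfrac{L_g}{2}\|y-y'\|_2^2 .
\]
Since $\clU^{(n)}$ is bounded, this gives Lipschitz dependence on $u$.
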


Existence of the optimal saddle points $\{z^*_n\}$ follows 
from standard minimax theory under the assumptions of 
\Cref{t:facts}~(ii). Establishing existence of the 
performatively stable points $\{\zPO_n\}$ is more subtle, as 
it requires a fixed-point argument as shown in \cite{woobiadal23}.

 It is argued \cite[Sec.~3.1]{kosneduda11} that under (A2) 
and (A5), there exists a sequence of sets 
$\Lambda^{(n)} := \big\{ \lambda \in \Re^M_+ : 
\sum_{i=1}^M \lambda_i \leq b^{(n)}_\Lambda \big\}$
with $b^{(n)}_\Lambda > 0$, such that 
$\lambdaPO_n \in  \Lambda^{(n)}$ for 
each $n$. The surrogate sets $\{\clH^{(n)}\}$ are defined in (A2)
with the same structure.

The following is \cite[Prop. 2.11]{woobiadal23} and establishes existence and uniqueness of $\{\zPO_n\}$. 

\begin{lemma}
\label[lemma]{t:zP}
Suppose that (A1)--(A3) hold along with (A5). Suppose moreover that $\frac{1}{\mu_\Uppsi} L_\Uppsi L_\nu<1$.  
Then, the sequence $\{\zPO_n\}$ exists, is unique, and satisfies:
$
\|   z^*_n - \zPO_n  \| \leq  \frac{1}{\mu_\Uppsi} L_\nu L_\Uppsi 2 \max\{ b^{(n)}_\clU , b^{(n)}_\Lambda   \}
$.
\hfill \qed
\end{lemma}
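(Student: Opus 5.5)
The plan is the standard fixed-point argument from the performative-prediction literature, applied separately at each fixed time $n$. For fixed $n$, define the map $T^{(n)}: \clU^{(n)} \to \clU^{(n)}$ as follows. For $v\in\clU^{(n)}$, let $\bar z(v) = (\bar u(v),\bar\lambda(v))$ be the unique saddle point of the decoupled problem
\[
\max_{\lambda \in \Lambda^{(n)}} \min_{u\in\clU^{(n)}} \Expect_{\Phi\sim\clD(v)}[\clL^{(n)}(u,\lambda,\Phi)].
\]
Then set $T^{(n)}(v) = \bar u(v)$.

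Existence and uniqueness of $\bar z(v)$ follow from \Cref{t:facts}~(ii). With the distribution frozen, the expected Lagrangian is $\mu$-strongly convex in $u$ and $\eta$-strongly concave in $\lambda$. The sets $\clU^{(n)}$ and $\Lambda^{(n)}$ are compact and convex by (A2) and the construction from \cite{kosneduda11}. So standard minimax theory gives a unique saddle point, which is characterized by the variational inequality
\[
\nabla\barUppsi^{(n)}(\bar z(v);v)^\transpose (z-\bar z(v))\ge 0 \quad \text{for all } z\in \clU^{(n)}\times\Lambda^{(n)}.
\]
A performatively stable point is then exactly a fixed point of $T^{(n)}$, or more conveniently a fixed point of the lifted map $z\mapsto \bar z(u)$.

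The key step is a sensitivity bound $\|\bar z(v)-\bar z(v')\| \le \frac{L_\Uppsi L_\nu}{\mu_\Uppsi}\|v-v'\|$. To get it:
\begin{itemize}
\item Write the two variational inequalities, test each at the other's solution, and add them.
\item This gives $[\nabla\barUppsi^{(n)}(\bar z;v)-\nabla\barUppsi^{(n)}(\bar z';v')]^\transpose(\bar z-\bar z')\le 0$.
\item Add and subtract $\nabla\barUppsi^{(n)}(\bar z';v)$.
\item Apply strong monotonicity (\Cref{t:facts}~(ii)) to the first difference and Cauchy--Schwarz with \Cref{t:facts}~(iv) to the second.
\end{itemize}
The result is $\mu_\Uppsi\|\bar z-\bar z'\|^2\le L_\Uppsi L_\nu\|v-v'\|\,\|\bar z-\bar z'\|$. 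Since $\|\bar u(v)-\bar u(v')\|\le\|\bar z(v)-\bar z(v')\|$, the map $T^{(n)}$ is a contraction on the complete set $\clU^{(n)}$ under the hypothesis $L_\Uppsi L_\nu/\mu_\Uppsi<1$. Banach's theorem gives a unique fixed point $\uPO_n$, and we set $\zPO_n=\bar z(\uPO_n)$. Uniqueness of $\zPO_n$ follows from uniqueness of the fixed point together with uniqueness of $\bar z(\cdot)$. Slater (A5) is used only to guarantee, via \cite{kosneduda11}, that restricting duals to $\Lambda^{(n)}$ loses nothing.

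For the distance bound, run the same variational-inequality argument comparing $z^*_n$ with $\zPO_n$. I expect this to be the main obstacle, because $z^*_n$ is the saddle point of the coupled problem, in which the distribution moves with $u$, so it is not characterized by a frozen-distribution variational inequality. I would follow \cite[Prop.~2.11]{woobiadal23}. Its argument compares $\zPO_n=\bar z(\uPO_n)$ with $\bar z(u^*_n)$ through the sensitivity bound and the optimality structure of $z^*_n$. The resulting error is $\frac{L_\Uppsi L_\nu}{\mu_\Uppsi}$ times the largest possible displacement $\|u-u'\|$ within the domain. By (A2) and the definition of $\Lambda^{(n)}$ that displacement is at most the diameter $2\max\{b^{(n)}_\clU,b^{(n)}_\Lambda\}$, which yields the stated inequality. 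If the direct comparison proves delicate, I would fall back on citing \cite[Prop.~2.11]{woobiadal23}, since the lemma is stated as that result, after checking that (A1)--(A3) imply its hypotheses through \Cref{t:facts}.
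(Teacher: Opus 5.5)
Your existence and uniqueness argument is correct. The frozen-distribution saddle map $v\mapsto\bar z(v)$, the sensitivity estimate $\mu_\Uppsi\|\bar z(v)-\bar z(v')\|\le L_\Uppsi L_\nu\|v-v'\|$ obtained by adding the two variational inequalities, and Banach's theorem on $\clU^{(n)}$ form the standard argument behind the result the paper cites. The paper gives no proof of its own beyond the pointer to \cite[Prop.~2.11]{woobiadal23}. One bookkeeping point: \Cref{t:facts}~(ii),(iv) are stated on $\clU^{(n)}\times\clH^{(n)}$, but you use them on $\clU^{(n)}\times\Lambda^{(n)}$. The $\Phi$-Lipschitz modulus of $\sum_i\lambda_i\nabla_u\Xi_i^{(n)}$ grows with the dual radius, so $L_\Uppsi$ must be computed with $\max\{\barb_\clH,b^{(n)}_\Lambda\}$.

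The distance bound is where the proposal has a real gap. Comparing $\zPO_n=\bar z(\uPO_n)$ with $\bar z(u^*_n)$ only bounds $\|\zPO_n-\bar z(u^*_n)\|$. Nothing in your sketch controls $\|z^*_n-\bar z(u^*_n)\|$; indeed $z^*_n=\bar z(u^*_n)$ would make $u^*_n$ a fixed point of $T^{(n)}$, so that term is the whole problem.

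More fundamentally, no argument using only $L_\Uppsi$, $L_\nu$, $\mu_\Uppsi$ and the set radii can work. The reason is that $z^*_n$ depends on how the \emph{value} $\Expect_{\Phi\sim\clD(u)}[\clL^{(n)}]$ moves with $u$ through the distribution, and $\nabla\Uppsi^{(n)}$ does not see this. For instance, adding $\kappa\,a^\transpose\Phi$ to $g^{(n)}_u$ is allowed by (A1). It leaves $\nabla\Uppsi^{(n)}$ unchanged, and hence also $L_\Uppsi$, $\mu_\Uppsi$, $\bar z(\cdot)$ and $\zPO_n$, but it shifts $z^*_n$.

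Concretely, take $d=q=M=1$, $\clU^{(n)}=[-1,1]$, $\Lambda^{(n)}=[0,1]$, $\nu(u,\gamma)=\gamma+\epsilon\,\Pi_{[-1,1]}(u)$ with $\Expect[\gamma]=0$ (so $L_\nu=\epsilon$), $g_y\equiv 0$, $g_u(u,\Phi)=\kappa\Phi$, and $g_\Xi\equiv -1$. Then $\nabla\Uppsi^{(n)}(z,\Phi)=(\mu u,\,1+\eta\lambda)$, so $\zPO_n=(0,0)$. On $\clU^{(n)}\times\Lambda^{(n)}$, however,
\[
\Expect_{\Phi\sim\clD(u)}[\clL^{(n)}(u,\lambda,\Phi)]=\kappa\epsilon u+\tfrac{\mu}{2}u^2-\lambda-\tfrac{\eta}{2}\lambda^2 ,
\]
so $z^*_n=(-1,0)$ as soon as $\kappa\epsilon\ge\mu$. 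Thus $\|z^*_n-\zPO_n\|=1$, while the stated right-hand side is $2\epsilon L_\Uppsi/\mu_\Uppsi$, which is below $1$ for small $\epsilon$. Hence the check you defer, that (A1)--(A3) deliver the hypotheses of the cited result, cannot succeed for the bound as stated.

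The missing ingredient is a Lipschitz (or mean-value) constant $L_\Phi$ for $\Phi\mapsto\clL^{(n)}(z,\Phi)$ itself, used in a value comparison rather than a variational-inequality comparison. Combine the saddle inequalities of $z^*_n$ for $\psi(u,\lambda)=\Expect_{\Phi\sim\clD(u)}[\clL^{(n)}(u,\lambda,\Phi)]$ with the $\mu_\Uppsi$-strong saddle inequalities of $\zPO_n$ for the problem frozen at $\uPO_n$. This gives
\[
\tfrac{\mu_\Uppsi}{2}\|z^*_n-\zPO_n\|^2\le\Expect\big[\clL^{(n)}(u^*_n,\lambdaPO_n,\nu(\uPO_n,\gamma))-\clL^{(n)}(u^*_n,\lambdaPO_n,\nu(u^*_n,\gamma))\big]\le L_\Phi L_\nu\|z^*_n-\zPO_n\| ,
\]
that is, $\|z^*_n-\zPO_n\|\le 2L_\Phi L_\nu/\mu_\Uppsi$. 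This needs an assumption not among (A1)--(A5), and its constant differs from the one stated.
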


As discussed in \Cref{s:Intro}, full information about $\nabla \Uppsi$ or the sequence $\{\Lambda^{(n)}\}$ cannot be expected in the setting of this paper. Instead, we  implement
the approximate projected primal-dual scheme \eqref{e:PSGD_noreg_online} to estimate $\{\zPO_n\}$ in which
$
\nabla \haUppsi^{(n)} =
\begin{bmatrix}
\nabla_u \haclL^{(n)}
&
-\nabla_\lambda \haclL^{(n)}
\end{bmatrix}^\transpose
$
where $  \nabla_\lambda \haclL^{(n)}= 
 \begin{bmatrix}
     \haXi_1^{(n)}
    &
    \cdots 
    &
      \haXi_M^{(n)}
\end{bmatrix}^\transpose
  - \eta \lambda_n$ and
\[
 \nabla_u \haclL^{(n)} = F^\transpose \nabla g_y^{(n)}(\hat{y}_n) 
  + \nabla_u g_u^{(n)}(u_n, \Phi_{n}) 
  + \lambda_n^\transpose 
\begin{bmatrix}
    \nabla_u \haXi_1^{(n)}
    \\
    \vdots 
    \\
     \nabla_u \haXi_M^{(n)}
\end{bmatrix} + \mu u_n
\]
with $ \haXi_i^{(n)}
=
{g^{i,(n)}_\Xi} (\hay_n)$ and $\nabla_u \haXi_i^{(n)}
=
 F^\transpose \nabla {g^{i,(n)}_\Xi} (\hay_n)$.

The following result is \cite[Lemma~4]{berdal19} and bounds 
the approximation error of $\nabla\haUppsi$ in terms of the 
measurement error $\varepsilon_m$ from (A4).
\begin{lemma}
		\label[lemma]{t:estimation_haclL}
		Suppose that (A1)--(A5) hold. Then,
		$
		\|  \nabla \haUppsi^{(n)} -  \nabla \Uppsi^{(n)}(z_n,\Phi_{n})  \|_{2,n} 
		\leq 
		b_{\haUppsi} \epsy_m
		$,
        in which $b_{\haUppsi} = L_g (\| F\|+ \barb_\Xi+ M \barb_\clH \| F \|) $ and $\barb_\Xi$ is a constant given in 
        \Cref{t:facts_f1}~(iii).
       \hfill \qed
	\end{lemma}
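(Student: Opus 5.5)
We need to bound $\|\nabla\haUppsi^{(n)} - \nabla\Uppsi^{(n)}(z_n,\Phi_n)\|_{2,n}$. The plan is to write out the difference component by component, using that the only discrepancy is $\hat y_n$ in place of $y_n = h^{(n)}(u_n,\Phi_n)$. The terms $\nabla_u g_u^{(n)}(u_n,\Phi_n)$, $\mu u_n$ and $-\eta\lambda_n$ cancel exactly. We are left with three pieces:
\begin{align*}
\nabla\haUppsi^{(n)} - \nabla\Uppsi^{(n)}(z_n,\Phi_n) =
\begin{bmatrix}
F^\transpose[\nabla g_y^{(n)}(\hat y_n) - \nabla g_y^{(n)}(y_n)] + \sum_{i} \lambda_{n,i} F^\transpose[\nabla g_\Xi^{i,(n)}(\hat y_n) - \nabla g_\Xi^{i,(n)}(y_n)]
\\
-\big(g_\Xi^{i,(n)}(\hat y_n) - g_\Xi^{i,(n)}(y_n)\big)_{i=1}^M
\end{bmatrix}.
\end{align*}
By the triangle inequality for the conditional $L_2$ norm $\|\cdot\|_{2,n}$ (Minkowski), it suffices to bound each block separately.

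\textbf{Gradient blocks.} By (A1), the first term has norm at most $\|F\| L_g \|\hat y_n - y_n\|$. For the dual-weighted sum, $\lambda_n \in \clH^{(n-1)}$ by the projection step. Hence $\lambda_{n,i}\ge 0$ and $\sum_i \lambda_{n,i} \le \barb_\clH$, so this sum is bounded by $\barb_\clH \|F\| L_g \|\hat y_n - y_n\|$. This is at most the $M\barb_\clH\|F\|L_g$ appearing in $b_{\haUppsi}$, a cruder bound arising from $\|\lambda_n\|_\infty\le\barb_\clH$ summed over the $M$ components. Since $\lambda_n$ is $\clF_n$-measurable, it passes through the conditional expectation.

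\textbf{Constraint block.} This requires a Lipschitz bound on $g_\Xi^{i,(n)}$ itself, not on its gradient. I would invoke the constant $\barb_\Xi$ from \Cref{t:facts_f1}~(iii). I expect it to bound $\|\nabla g_\Xi^{i,(n)}\|$ (or the stacked Jacobian) on the relevant region of outputs, which is bounded because $\clU^{(n)}$ is compact by (A2) and $\Phi$, $r_n$ have bounded moments by (A3). The mean value theorem then gives $\|\Xi(\hat y_n)-\Xi(y_n)\| \le \barb_\Xi\|\hat y_n - y_n\|$, with $L_g$ absorbed as in the stated constant.

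\textbf{Conclusion and main obstacle.} Collecting the three blocks and applying (A4), $\|\hat y_n - y_n\|_{2,n}\le\epsy_m$, yields the constant $b_{\haUppsi}$. The main obstacle is the constraint block. A global Lipschitz bound on $g_\Xi$ is not implied by Lipschitz gradients alone, and $\hat y_n$ may fall outside any bounded set. I would rely on \Cref{t:facts_f1}~(iii) being stated as a uniform bound on the gradient of $g_\Xi$ along segments between $y_n$ and $\hat y_n$. If it is only a bound at $y_n$, I would add a second-order correction $L_g\|\hat y_n-y_n\|^2/2$. Controlling that correction needs fourth moments, so I would simply adopt the paper's constant as given, following \cite[Lemma~4]{berdal19}.
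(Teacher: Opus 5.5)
Your blockwise decomposition is the natural one, and you handle the two gradient blocks correctly. Your bound $\barb_\clH\|F\|L_g$ for the dual-weighted block, via $\lambda_n\in\clH^{(n-1)}$, is even sharper than the $M\barb_\clH\|F\|L_g$ in the statement.

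The genuine gap is the constraint block, and the repair you propose does not close it. \Cref{t:facts_f1}~(iii) gives no bound on $\nabla g^{i,(n)}_\Xi$ along the segment from $y_n$ to $\hay_n$. Its $\barb_\Xi$ is a conditional-mean Lipschitz constant of $(u,\Phi)\mapsto\Xi^{(n)}(u,\Phi)$. It is built from second-moment bounds on $F^\transpose\nabla g^{i,(n)}_\Xi$ and $G^\transpose\nabla g^{i,(n)}_\Xi$ evaluated at model outputs $h^{(n)}(u,\Phi)$, and $\hay_n$ is not such an output. Even the first-order term $\nabla g^{i,(n)}_\Xi(y_n)^\transpose(\hay_n-y_n)$ is not controlled. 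A second-moment bound on $\nabla g^{i,(n)}_\Xi(y_n)$ cannot be multiplied against $\|\hay_n-y_n\|$ inside $\Expect[\,\cdot\mid\clF_n]$ without independence or higher moments. The quadratic remainder needs fourth moments, as you say. ``Adopting the paper's constant'' is then just assuming the conclusion. The paper does not help here either: it gives no argument beyond citing \cite[Lemma~4]{berdal19}.

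In fact this is an obstruction, not a technicality: under (A1)--(A5) alone the inequality can fail. Take $M=1$ and $g^{1,(n)}_\Xi(y)=\tfrac12\|y\|^2-c$, which is convex with $1$-Lipschitz gradient. Let $e_n\eqdef\hay_n-y_n$ be independent of $\Phi_n$ and $\clF_n$, with $\Expect\|e_n\|^2=\epsy_m^2$ but $\Expect\|e_n\|^4=\infty$ (e.g.\ scaled Student-$t$ coordinates with three degrees of freedom). Then the $\lambda$-block of the error equals $-(y_n^\transpose e_n+\tfrac12\|e_n\|^2)$. Using $(a+b)^2\ge\tfrac12 b^2-a^2$,
\[
\Expect\big[(y_n^\transpose e_n+\tfrac12\|e_n\|^2)^2\mid\clF_n\big]\ge\tfrac18\,\Expect\|e_n\|^4-\epsy_m^2\,\Expect[\|y_n\|^2\mid\clF_n]=\infty,
\]
while $b_{\haUppsi}\epsy_m<\infty$.

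What is missing is an additional hypothesis. One option is uniformly bounded constraint gradients, $\sup_{n,i,y}\|\nabla g^{i,(n)}_\Xi(y)\|\le\ell_\Xi$. Another is a.s.\ bounded measurement errors together with a.s.\ bounded outputs, so that $y_n$ and $\hay_n$ lie in a fixed compact set on which $\nabla g^{i,(n)}_\Xi$ is bounded. With the former, the mean value theorem gives $|g^{i,(n)}_\Xi(\hay_n)-g^{i,(n)}_\Xi(y_n)|\le\ell_\Xi\|\hay_n-y_n\|$ pointwise. The constraint block is then at most $\sqrt{M}\,\ell_\Xi\epsy_m$ in $\|\cdot\|_{2,n}$, and your argument closes with $L_g\barb_\Xi$ replaced by $\sqrt{M}\,\ell_\Xi$. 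You should state such an assumption explicitly rather than defer to the stated constant.
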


Since the surrogate dual sets $\{\clH^{(n)}\}$ may not 
contain the performatively stable dual variables, the 
projection onto $\clH^{(n)}$ may differ from the projection 
onto  $\Lambda^{(n)}$. This mismatch is quantified by the following result; notably, the error vanishes 
whenever $\Lambda^{(n)} \subseteq \clH^{(n)}$, which suggests choosing 
$b^{(n)}_\clH$ conservatively large in practice.
Let 
$\barclL(\uPO_n,\lambdaPO_n) = \Expect_{\PhiPO_n \sim \clD(\uPO_n)}[\clL(\uPO_{n},\lambdaPO_n,\PhiPO_{n})]$ and
    \[
    \clG^{(n)}(\clU,\Lambda,\zPO) \eqdef \Proj_{\clU^{(n)} \times \Lambda^{(n)}} \Big\{\zPO_{n}  - \alpha \begin{bmatrix}
		     \nabla_u \barclL(\uPO_n,\lambdaPO_n) 
             \\
             - \nabla_\lambda\barclL(\uPO_n,\lambdaPO_n) 
		 \end{bmatrix}\Big\}\,.
    \]

    \begin{corollary}
		\label[corollary]{t:diam_bdd}
        Under (A2), we  have that for each $n$,
        $
\|\clG^{(n)}(\clU,\Lambda,\zPO)  - \clG^{(n)}(\clU,\clH,\zPO) \| \leq \epsy_\clH 
        $,
in which $\epsy_\clH = \sup_{n} \max\{b^{(n)}_\Lambda - b^{(n)}_\clH ,0 \} $. 
\hfill  \qed
\end{corollary}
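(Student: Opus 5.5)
Both quantities are projections of the same point $w \eqdef \zPO_n - \alpha[\nabla_u \barclL;\,-\nabla_\lambda\barclL]$ onto product sets that share the factor $\clU^{(n)}$. The plan is to reduce the claim to a statement about projections onto two simplices of different radius.

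Since the projection onto a Cartesian product acts componentwise, write $w=(w_u,w_\lambda)$. Then
\[
\clG^{(n)}(\clU,\Lambda,\zPO) - \clG^{(n)}(\clU,\clH,\zPO) = \big(0,\ \Proj_{\Lambda^{(n)}}(w_\lambda) - \Proj_{\clH^{(n)}}(w_\lambda)\big).
\]
So it suffices to show that for any $v\in\Re^M$ and radii $a=b^{(n)}_\Lambda$, $b=b^{(n)}_\clH$,
\[
\|\Proj_{S_a}(v)-\Proj_{S_b}(v)\| \le \max\{a-b,0\},
\]
where $S_c \eqdef \{\lambda\in\Re^M_+:\sum_i\lambda_i\le c\}$. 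Taking the supremum over $n$ then gives $\epsy_\clH$.

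\textbf{Case $a\le b$.} If $\Proj_{S_b}(v)\in S_a$, it is also the projection onto the smaller set $S_a\subseteq S_b$, and the difference is zero. This case is clean exactly when $\lambdaPO_n$-related projections land inside $S_a$. In general, however, one still gets a bound of order $|a-b|$ by the argument of the other case with roles swapped. I will note this subtlety: the stated bound uses $\max\{a-b,0\}$, so I expect the intended reading is that the relevant projected point lies in $\Lambda^{(n)}$. I would check whether $\zPO_n$ being a fixed point of $\clG^{(n)}(\clU,\Lambda,\zPO)$ makes $\Proj_{\Lambda}(w_\lambda)=\lambdaPO_n\in\Lambda^{(n)}$, and then argue that $\Proj_{\clH}(w_\lambda)$ also equals it when $\Lambda^{(n)}\subseteq\clH^{(n)}$. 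Indeed $w_\lambda$ projects onto $\lambdaPO_n$ in the larger set too, unless the sum constraint of $S_a$ is active; if it is active, then one needs the general estimate.

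\textbf{Case $a>b$, and the general estimate.} Use the explicit structure of simplex projections. The KKT conditions give $\Proj_{S_c}(v)=(v-\tau_c\mathbf 1)_+$ with threshold $\tau_c\ge0$ nondecreasing as $c$ decreases. Hence $p_a=(v-\tau_a\mathbf 1)_+\ge p_b=(v-\tau_b\mathbf 1)_+$ componentwise, with both vectors nonnegative. Therefore
\[
\|p_a-p_b\|_2\le\|p_a-p_b\|_1=\sum_i (p_a-p_b)_i\le a-b,
\]
since $\sum_i p_{b,i}=b$ whenever $\tau_b>0$, and $p_a=p_b$ if $\tau_b=0$. The same monotone argument with roles swapped handles $a<b$ with bound $b-a$. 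This shows the difference vanishes in the first case precisely because the KKT threshold for the larger set is zero whenever the smaller-set projection is unconstrained by the sum.

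\textbf{Main obstacle.} The main obstacle is this monotonicity of the threshold and the ordering of the projections, which I would verify via the KKT system $\sum_i(v_i-\tau)_+=c$. Its left side is nonincreasing in $\tau$, which gives both the ordering and the $\ell_1$ identity.
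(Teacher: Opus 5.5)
Your reduction to the dual block is correct, and so is your threshold argument for $b^{(n)}_\clH<b^{(n)}_\Lambda$. Writing $p_c=(v-\tau_c\mathbf 1)_+$ with $\tau_c$ nonincreasing in $c$, you get $p_a\ge p_b\ge 0$ and $\|p_a-p_b\|\le\|p_a-p_b\|_1\le a-b$ for every $v$. The paper gives no proof of this corollary; it only asserts it. So this part supplies an argument the paper omits.

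The gap is the other case, $b^{(n)}_\Lambda\le b^{(n)}_\clH$, where the statement claims the difference is exactly $0$. You close it only when the threshold $\tau_a$ of $\Proj_{\Lambda^{(n)}}(w_\lambda)$ vanishes. When the sum face of $\Lambda^{(n)}$ is binding, you fall back on the swapped estimate, which gives $b^{(n)}_\clH-b^{(n)}_\Lambda>0$, not $0$.

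Projection geometry alone cannot repair this, because (A2) together with the fixed-point identity $\lambdaPO_n=\Proj_{\Lambda^{(n)}}(w_\lambda)$ does not imply the claim. Take $M=1$, $\Lambda^{(n)}=[0,1]$, $\clH^{(n)}=[0,2]$ and $w_\lambda=3/2$. The identity holds with $\lambdaPO_n=1$, yet the two projections differ by $1/2$, while $\max\{b^{(n)}_\Lambda-b^{(n)}_\clH,0\}=0$.

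The missing ingredient is a property the paper uses tacitly when it says the error vanishes for $\Lambda^{(n)}\subseteq\clH^{(n)}$. Namely, $\Lambda^{(n)}$ is the Slater-based bound of Koshal--Nedi\'c--Shanbhag: $\lambdaPO_n$ maximizes the strongly concave map $\lambda\mapsto\barclL(\uPO_n,\lambda)$ over the whole orthant $\Re^M_+$, not just over $\Lambda^{(n)}$, so the cap $b^{(n)}_\Lambda$ is never active. Then $\lambdaPO_n=\Proj_{\Re^M_+}(w_\lambda)=(w_\lambda)_+$, which forces $\tau_a=0$. Since $\lambdaPO_n\in\Lambda^{(n)}\subseteq\clH^{(n)}\subseteq\Re^M_+$, it is also $\Proj_{\clH^{(n)}}(w_\lambda)$, so the difference is $0$. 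In fact $\lambdaPO_n\in\clH^{(n)}$ is enough. You need to state this hypothesis explicitly and use it to settle the binding sub-case, rather than deferring to the general estimate.
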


We now state the main result.

\begin{theorem}
		\label[theorem]{t:online_tracking_result_limsup}
		Suppose (A1)--(A5) hold. Suppose moreover there exists $\barpsi<\infty$ such that $ \| \zPO_{n+1} - \zPO_n \|\leq \barpsi$ for all $n$ and $\frac{1}{\mu_\Uppsi}L_\Uppsi L_\nu<1$. Let $\{z_n\}$ be the iterates of~\eqref{e:PSGD_noreg_online}. Then, the bound \eqref{e:MSE_lim} holds for $\alpha < \frac{\mu^e_\Uppsi}{2 L_\Uppsi^2(1 + L_\nu^2)}$
    with:
    \[ \begin{aligned}
		\rho_a & \eqdef \frac{4}{\mu^e_\Uppsi}  \sigma^2_\Uppsi   \, , \quad  \rho_b  \eqdef \alpha \frac{4}{\mu^e_\Uppsi}    b^2_{\haUppsi} \epsy^2_m   + \frac{4}{\mu^e_\Uppsi} b_{\haUppsi} \epsy_m b^{\clU,\clH}  \, ,
		\\
         \rho_c &\eqdef \frac{1}{\mu^e_\Uppsi} \barpsi^2  \, ,
        \quad
        \rho_d  \eqdef \frac{1}{\mu^e_\Uppsi } \epsy_\clH(\epsy_\clH + 2 b^{\clU,\clH}  )
        \, , \quad   b_\circ \eqdef \frac{4}{\mu^e_\Uppsi}.
		\end{aligned}
		\]
        and $\mu^e_\Uppsi = \mu_\Uppsi - L_\Uppsi L_\nu$.
	\end{theorem}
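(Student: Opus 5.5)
The approach is a one-step recursion for $e_n \eqdef \Expect[\|z_n - \zPO_n\|^2]$, unrolled and passed to the $\limsup$.

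\textbf{Step 1: one-step comparison.} Using $\|z_{n+1}-\zPO_{n+1}\| \le \|z_{n+1}-\zPO_n\| + \barpsi$, I will write
\[
\|z_{n+1}-\zPO_{n+1}\|^2 \le (1+\kappa)\|z_{n+1}-\zPO_n\|^2 + (1+\kappa^{-1})\barpsi^2 ,
\]
with a free parameter $\kappa>0$ chosen later, proportional to $\alpha\mu^e_\Uppsi$. This choice is what produces the $\rho_c/\alpha$ term.

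\textbf{Step 2: fixed-point comparison.} For $\|z_{n+1}-\zPO_n\|$, I compare $z_{n+1}$ with the fixed-point identity $\zPO_n = \clG^{(n)}(\clU,\Lambda,\zPO)$, which holds by the variational characterization of performative stability.

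\begin{itemize}
\item I insert $\clG^{(n)}(\clU,\clH,\zPO)$. By Corollary~\ref{t:diam_bdd} this costs at most $\epsy_\clH$.
\item The remaining difference is $\Proj_{\clU^{(n)}\times\clH^{(n)}}\{z_n-\alpha\nabla\haUppsi^{(n)}\} - \Proj_{\clU^{(n)}\times\clH^{(n)}}\{\zPO_n-\alpha\nabla\barUppsi^{(n)}(\zPO_n;\uPO_n)\}$. Nonexpansiveness of the projection bounds it by $\|z_n-\zPO_n - \alpha(\nabla\haUppsi^{(n)}-\nabla\barUppsi^{(n)}(\zPO_n;\uPO_n))\|$.
\item Squaring the triangle inequality gives the $\epsy_\clH^2 + 2\epsy_\clH b^{\clU,\clH}$ structure of $\rho_d$. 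For the cross term I bound $\|\cdot\|$ by diameters, since all points lie in the compact sets of (A2).
\end{itemize}

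\textbf{Step 3: decomposing the gradient error.} I split
\[
\nabla\haUppsi^{(n)} - \nabla\barUppsi^{(n)}(\zPO_n;\uPO_n)
\]
into four pieces:
\begin{enumerate}
\item[(a)] the measurement error $\nabla\haUppsi^{(n)}-\nabla\Uppsi^{(n)}(z_n,\Phi_n)$, controlled by Lemma~\ref{t:estimation_haclL};
\item[(b)] the zero-mean noise $\nabla\Uppsi^{(n)}(z_n,\Phi_n)-\nabla\barUppsi^{(n)}(z_n;u_n)$, with conditional variance at most $\sigma_\Uppsi^2$ by Lemma~\ref{t:facts}(iii);
\item[(c)] the distribution shift $\nabla\barUppsi^{(n)}(z_n;u_n)-\nabla\barUppsi^{(n)}(z_n;\uPO_n)$, of size at most $L_\Uppsi L_\nu\|u_n-\uPO_n\|$ by Lemma~\ref{t:facts}(iv);
\item[(d)] the deterministic monotone part $\nabla\barUppsi^{(n)}(z_n;\uPO_n)-\nabla\barUppsi^{(n)}(\zPO_n;\uPO_n)$.
\end{enumerate}

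Expanding the square and conditioning on $\clF_n$:
\begin{itemize}
\item The cross term of (b) with $z_n-\zPO_n$ vanishes.
\item The cross term with (d) is at least $\mu_\Uppsi\|z_n-\zPO_n\|^2$ by Lemma~\ref{t:facts}(ii).
\item The cross term with (c) is at most $L_\Uppsi L_\nu\|z_n-\zPO_n\|^2$. Together these give the contraction factor $1-2\alpha\mu^e_\Uppsi$.
\item The cross term with (a) I will not square. I bound it by $2\alpha b_{\haUppsi}\epsy_m\cdot 2b^{\clU,\clH}$, which produces the $\alpha$-free part of $\rho_b$.
\item The squared terms are bounded using Lemma~\ref{t:facts}(i), giving $\alpha^2 L_\Uppsi^2(1+L_\nu^2)\|z_n-\zPO_n\|^2$ plus $\alpha^2(\sigma_\Uppsi^2 + b_{\haUppsi}^2\epsy_m^2)$ with constants around 4.
\end{itemize}

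The step-size condition $\alpha<\mu^e_\Uppsi/(2L_\Uppsi^2(1+L_\nu^2))$ absorbs the quadratic term, leaving a factor $1-\alpha\mu^e_\Uppsi$.

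\textbf{Step 4: closing the recursion (main obstacle).} The difficulty is the $\epsy_\clH$ cross term, which couples to $\sqrt{e_n}$ rather than $e_n$. I expect this coupling is also what produces the square-root term with $b_\circ\sqrt{\rho_c[\cdots]}$ in \eqref{e:MSE_lim}. My plan is as follows.
\begin{itemize}
\item First derive a recursion of the form $e_{n+1}\le(1+\kappa)(1-\alpha\mu^e_\Uppsi)e_n + (1+\kappa)\alpha^2 C + (1+\kappa^{-1})\barpsi^2$, with the $\rho_d$ contribution collected in $C$.
\item Pick $\kappa$ so that $(1+\kappa)(1-\alpha\mu^e_\Uppsi)\le 1-\alpha\mu^e_\Uppsi/2$.
\item Unroll geometrically and take $\limsup$ to get $e_\infty \le \tfrac{2}{\alpha\mu^e_\Uppsi}[\,\cdot\,]$.
\item The $\barpsi$ contribution then appears as $\rho_c/\alpha$ times $(1+1/\kappa)$. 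Expanding $1/\kappa$ gives the $b_\circ\rho_c/\alpha^2$ term.
\item The square-root term arises from the mixed product when the triangle-inequality step is instead handled by $\|a+b\|^2\le\|a\|^2+2\|a\|\|b\|+\|b\|^2$ with Cauchy–Schwarz. This gives $e_\infty \le A + B\sqrt{A}$, and I resolve it by substituting the bound on $A$.
\end{itemize}

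Matching the exact constants $\rho_a,\dots,\rho_d,b_\circ$ will require care in this final bookkeeping. The structural steps above are routine.
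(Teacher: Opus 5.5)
The one-step analysis in your Steps 2--3 matches the paper's \Cref{t:one_step_couple_haobj} and \Cref{t:one_step_clip}: same split into $\clE_n,\clM^a_n,\clM^b_n,\clM^c_n$, same $\Upupsilon_\alpha$ and $\clB_\alpha$. The gap is in Steps 1 and 4, where you combine the drift $\barpsi$ with that one-step bound. With $\kappa$ tied to $\alpha\mu^e_\Uppsi$ and contraction factor $1-\alpha\mu^e_\Uppsi/2$, your recursion yields
\[
\limsup_{n\to\infty}\Expect[\|\tilz_n\|^2]\le\frac{2}{\alpha\mu^e_\Uppsi}\Big[(1+\kappa)\clB_\alpha+(1+\kappa^{-1})\barpsi^2\Big].
\]
So the stochastic, measurement and surrogate contributions enter as at least $2\clB_\alpha/(\alpha\mu^e_\Uppsi)=2(\alpha\rho_a+\rho_b+\rho_d/\alpha)$, whereas \eqref{e:MSE_lim} has coefficient $1$ on these terms. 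When $\barpsi$ is small relative to $\sqrt{\clB_\alpha}$ (e.g.\ $\barpsi=0$), the right side of \eqref{e:MSE_lim} is essentially $\clB_\alpha/(\alpha\mu^e_\Uppsi)$. Your result is then strictly weaker than the statement, and the cause is the choice of $\kappa$, not bookkeeping.

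You also misplace the square-root term. The $\epsy_\clH$ cross term is bounded by the diameter (by you in Step 2, and by the paper) and lands in $\rho_d$. The term $\frac{b_\circ}{\alpha^{3/2}}\sqrt{\rho_c[\cdots]}=4\barpsi\sqrt{\clB_\alpha}/(\alpha\mu^e_\Uppsi)^2$ instead comes from the drift cross term $2(\zPO_n-\zPO_{n+1})^\transpose(z_{n+1}-\zPO_n)$. Your fallback ``$e_\infty\le A+B\sqrt A$, then substitute the bound on $A$'' presupposes an a priori bound on $\|z_{n+1}-\zPO_n\|_2$ that you never derive; that bound is the missing ingredient.

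The paper supplies it as follows. It keeps the exact expansion $\|\tilz_n\|^2=\|z_n-\zPO_{n-1}\|^2+\|\zPO_{n-1}-\zPO_n\|^2+2\clR_n$ and unrolls the linear recursion with $2\Expect[\clR_n]$ as forcing. It bounds $|\Expect[\clR_n]|\le\barpsi\|z_n-\zPO_{n-1}\|_2$ through the separate first-moment recursion of \Cref{t:middle_track},
\[
\|z_n-\zPO_{n-1}\|_2\le\sqrt{\Upupsilon_\alpha}\,\|z_{n-1}-\zPO_{n-2}\|_2+\sqrt{\Upupsilon_\alpha}\,\barpsi+\sqrt{\clB_\alpha}.
\]
With $a=\alpha\mu^e_\Uppsi$ and $1-\sqrt{\Upupsilon_\alpha}\ge a/2$, this gives exactly $\frac{\clB_\alpha+\barpsi^2}{a}+\frac{4(\barpsi\sqrt{\clB_\alpha}+\barpsi^2)}{a^2}$, which is \eqref{e:MSE_lim}.

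Alternatively, your own idea closes in one line. Set $s_n=\|\tilz_n\|_2$; Minkowski in $L_2$ gives $s_{n+1}\le\sqrt{(1-a)s_n^2+\clB_\alpha}+\barpsi$. The right side is increasing in $s_n$ with slope at most $\sqrt{1-a}<1$, so $\limsup s_n\le s^\ast=[\barpsi+\sqrt{(1-a)\barpsi^2+a\clB_\alpha}]/a$. Hence
\[
\limsup_{n\to\infty}\Expect[\|\tilz_n\|^2]\le\frac{\clB_\alpha}{a}+\frac{4\barpsi^2}{a^2}+\frac{2\barpsi\sqrt{\clB_\alpha}}{a^{3/2}},
\]
which lies below the right side of \eqref{e:MSE_lim} since $a<1$. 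This is Young's inequality with $\kappa=\barpsi/\sqrt{(1-a)(s^\ast)^2+\clB_\alpha}$. In other words, $\kappa$ must depend on the ratio $\barpsi/\sqrt{\clB_\alpha}$, not on $\alpha\mu^e_\Uppsi$ alone. With that choice your route bypasses \Cref{t:middle_track} and is slightly sharper than the paper's.

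One caveat you share with the paper: $\alpha<\mu^e_\Uppsi/(2L_\Uppsi^2(1+L_\nu^2))$ only gives $\Upupsilon_\alpha<1$. The factor $\Upupsilon_\alpha\le 1-\alpha\mu^e_\Uppsi$ that both of you use requires $\alpha\le\mu^e_\Uppsi/(4L_\Uppsi^2(1+L_\nu^2))$.
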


\vspace{.1cm}



In~\eqref{e:MSE_lim}, the stepsize 
$\alpha$ balances stochastic averaging against tracking of a moving 
target: decreasing $\alpha$ reduces the stochastic contribution but 
amplifies the terms associated with time variation and surrogate-set 
mismatch, whereas increasing $\alpha$ has the opposite effect. Hence, the 
bound cannot generally be made arbitrarily small by tuning $\alpha$ 
alone.

We also stress one of the main contributions: in many engineering applications, the magnitude of the optimal dual variables is not known a priori. The sets $\{\mathcal H^{(n)}\}$ therefore act as implementable approximations of the unknown dual constraint sets. The resulting mismatch introduces an error captured by $\rho_d$. The regularization parameters $\mu$ and $\eta$ 
enter the analysis through the strong-monotonicity parameter 
$\mu_\Uppsi=\min\{\mu,\eta\}$ and hence affect the admissible step-size 
and the constants in~\eqref{e:MSE_lim}. Smaller values of $\mu$ and 
$\eta$ reduce the bias introduced by regularization, but they also weaken strong 
monotonicity.

\section{Numerical Experiment}
We illustrate our method on a problem  emulating the optimization of a power system with 
price-responsive assets.

\wham{Setup:} 
The system models a virtual power plant with heterogeneous assets, including three uncontrollable loads, three price-responsive loads, and three controllable photovoltaic (PV) units. The controllable loads can represent, for example, data centers providing grid services. The control input is 
$u = [v \; w]^\transpose$, 
where $v \in \Re^3$ denotes PV power injections and 
$w \in \Re^3$ denotes price incentives. Each consumer 
responds to the price signal via 
$\Phi_n = E w_n + \xi_n$, where $\{\xi_n\}$ is a sequence of i.i.d.\ random variables with mean 
$\mu_\xi$ and standard 
deviation $\sigma_\xi$, representing baseline 
consumption of flexible loads.
The scalar output is the 
feeder head power $y_n = \mathbf{1}^\transpose v_n + 
\mathbf{1}^\transpose \Phi_n + \mathbf{1}^\transpose r_n$, 
where $\{r_n\}$ collects uncontrollable power.    
The 
optimization problem is
\begin{align}
  &\min_{u \in \clU^{(n)}} 
  \Expect_{\Phi \sim \clD(u)}[\Obj^{(n)}(u,\Phi)]
  \nonumber \\
  &\text{s.t.} \quad 
  \Expect_{\Phi \sim \clD(u)}[y_n] = P_0^n
  \, , \quad 
  \Expect_{\Phi \sim \clD(u)}[|y_n - P_0^n|^2] 
  \leq \epsy_P
  \nonumber
\end{align}
where
$\Obj^{(n)}(u,\Phi) = c_D \|v_n - \bar{P}_n\|^2 + 
c_P({w_n}^\transpose \Phi_n + 
\|M_{\text{reg}} w_n\|^2)$
balances tracking of available PV power $\{\bar{P}_n\}$ 
against the cost of price incentives and $\{ P^n_0 \}$ is a desired power profile.

\wham{Results:} The primal-dual scheme was implemented for a time horizon of $N =10^4$. The parameters were chosen as $\alpha = 5 \times 10^{-3}$, $\eta = 0.02$, $\epsy_P = 0.25$, 
$c_D = c_P = 1$, $M_{\text{reg}} = 0.15 I$, and 
$b_\clH =15$. The reference signals $\{\bar{P}_n\}$, 
$\{P_0^n\}$, and uncontrollable load  profiles $\{r_n\}$ were chosen as sinusoidal curves, and $\clU^{(n)} \equiv \clU = \{ u \in \Re^6 : u^\transpose u \leq 15\}$. The random variable $\{ \xi_n\}$ was chosen Gaussian with $\mu_\xi = [-1.0, -1.5, -2.0]^\transpose$, $\sigma_\xi = 0.3$ and $E = \text{diag}(0.4, 0.5, 0.6)$. Real-time measurements were emulated through the model $\hay_n = y_n + \epsy^\bullet_n$ in which $\epsy^\bullet_n \sim \text{Unif}[-1/2,1/2]$.

\Cref{fig:Tracking_perf} shows the tracking error 
 between estimates and performatively stable decisions, and the evolution of 
the objective as functions of $n$. As expected from 
\Cref{t:online_tracking_result_limsup}, the algorithm tracks 
the performatively stable saddle points with bounded error after a transient period.

\section{Technical Proofs}
Throughout this section, we fix $\gamma_0 \sim \uppi$ and let $\Phi = \nu(u,\gamma_0)$, $\Phi' = \nu(u',\gamma_0)$.
	For each $n$, we use the following short-hand notation for tracking errors:  \, $\tilz_n \eqdef z_n - \zPO_n$,
	$
	\nabla \tilUppsi^{(n)}(u_n,\Phi_{n}) \eqdef  \nabla \Uppsi^{(n)}(u_n,\Phi_{n}) - \nabla \Uppsi^{(n)}(\uPO_n,\PhiPO_{n})
	$.

We begin by establishing  Lipschitz continuity and strong monotonicity of the map $\nabla \Uppsi$:
\begin{lemma}
		\label[lemma]{t:facts_f1}
		Suppose that (A1)--(A3) hold.  Then, for each $n$ and all $u,u' \in \clU^{(n)}$ that are $\clF_n$-measurable, 
        
        \whamrm{(i)} $\nabla_u \Obj^{(n)}$ is Lipschitz continuous in conditional mean in its first variable: there is $L_\Obj<\infty$ such that
		$
		\|\nabla_u \Obj^{(n)}(u,\Phi)   - \nabla_u \Obj^{(n)}(u',\Phi')   \|_{2,n}
		\leq 
		L_\Obj (\| u - u'   \|+ \| \Phi - \Phi'   \|_{2,n})$.
		
        \whamrm{(ii)} For each $1\leq i\leq M$,
$\nabla_u \Xi_i^{(n)}$ is Lipschitz continuous in conditional mean in its first variable: there is $L_\Xi<\infty$ such that
		$
\|\nabla_u \Xi_i^{(n)}(u,\Phi)   - \nabla_u \Xi_i^{(n)}(u',\Phi')   \|_{2,n}
		\leq 
		L_\Xi  (\| u - u'   \|+ \| \Phi - \Phi'   \|_{2,n}).
		$

            \whamrm{(iii)} 
$\Xi^{(n)}$ is Lipschitz continuous in conditional mean in its first variable: there is $\barb_\Xi<\infty$ such that 
		$
		\| \Xi^{(n)}(u,\Phi)   - \Xi^{(n)}(u',\Phi')  \|_{2,n}
		\leq 
		\barb_\Xi (\| u - u'   \|+ \| \Phi - \Phi'   \|_{2,n}).$
	
\end{lemma}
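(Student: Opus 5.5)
The plan is to exploit the affine structure of the plant \eqref{e:model_lin}, $h^{(n)}(u,\Phi)=Fu+G\Phi+Dr_n$. With $y=h^{(n)}(u,\Phi)$ and $y'=h^{(n)}(u',\Phi')$ we get, pointwise, $\|y-y'\|\le \|F\|\|u-u'\|+\|G\|\|\Phi-\Phi'\|$; the disturbance term $Dr_n$ cancels because both outputs share the same $r_n$. Every claimed bound then follows from this estimate combined with the Lipschitz properties in (A1). We then take conditional second moments. Because $u,u'$ are $\clF_n$-measurable, $\|u-u'\|$ is a constant given $\clF_n$, and Minkowski's inequality in $L_2(\cdot\mid\clF_n)$ turns the pointwise bounds into the stated bounds with $\|\Phi-\Phi'\|_{2,n}$.

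For (i), the chain rule gives $\nabla_u\Obj^{(n)}(u,\Phi)=F^\transpose\nabla g_y^{(n)}(h^{(n)}(u,\Phi))+\nabla_u g_u^{(n)}(u,\Phi)$. The first term differs from its primed version by at most $\|F\|L_g(\|F\|\|u-u'\|+\|G\|\|\Phi-\Phi'\|)$. By (A1), the second term differs by at most $L_g(\|u-u'\|+\|\Phi-\Phi'\|)$. Summing these and applying Minkowski gives (i) with $L_\Obj=L_g(1+\|F\|\max\{\|F\|,\|G\|\})$.

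For (ii), we have $\nabla_u\Xi_i^{(n)}=F^\transpose\nabla g_\Xi^{i,(n)}(h^{(n)}(u,\Phi))$. The same argument gives $L_\Xi=L_g\|F\|\max\{\|F\|,\|G\|\}$.

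Part (iii) is the main obstacle, since (A1) gives only Lipschitz \emph{gradients} of $g_\Xi^{i,(n)}$ and not Lipschitz functions. My first attempt would be to write $g(y)-g(y')=\int_0^1\nabla g(y'+t(y-y'))^\transpose(y-y')\,dt$. This reduces the problem to a uniform bound on $\|\nabla g_\Xi^{i,(n)}\|$ over the relevant range of outputs. That bound is $\|\nabla g(y)\|\le\|\nabla g(0)\|+L_g\|y\|$, which needs $\sup_n\|\nabla g_\Xi^{i,(n)}(0)\|<\infty$ (tacitly assumed, or obtained by recentering) and control of $\|y\|$. Here $u,u'$ lie in $\clU^{(n)}$, so $\|u\|\le\barb_\clU$ by (A2). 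Also $\|r_n\|\le\sigma_\Delta$, and $\|\Phi\|_2\le\sigma_\Delta$ by (A3). So the outputs are bounded, but only in $L_2$ where $\Phi$ is concerned.

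This creates a product $\|\nabla g(\cdot)\|\,\|\Phi-\Phi'\|$ whose $L_2$ norm cannot be split using second moments alone. I would handle it in one of two ways. The first is to invoke boundedness of the support of $\Phi$, as is implicit in the paper's use of a constant $\barb_\Xi$. The second is to note that in the paper's setting the constraint functions are evaluated on a compact region, so that $\nabla g_\Xi$ is bounded there by some $B$. Either way we get $\|\Xi_i^{(n)}(u,\Phi)-\Xi_i^{(n)}(u',\Phi')\|\le B(\|F\|\|u-u'\|+\|G\|\|\Phi-\Phi'\|)$ pointwise. Stacking the $M$ components gives (iii) with $\barb_\Xi=\sqrt{M}\,B\max\{\|F\|,\|G\|\}$, after taking conditional $L_2$ norms as before.
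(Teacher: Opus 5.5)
Your parts (i) and (ii) follow the paper's argument: the chain rule through the affine plant (so $Dr_n$ cancels), the Lipschitz-gradient bounds of (A1), and conditional Minkowski using that $u,u'$ are $\clF_n$-measurable. Your constants differ from the paper's but are valid, and in fact are no larger.

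For (iii) the paper also takes the mean-value route, but it only shows $\|\nabla_u\Xi_i^{(n)}(u,\Phi)\|_{2,n}+\|\nabla_\Phi\Xi_i^{(n)}(u,\Phi)\|_{2,n}\le b_\Xi$ and then asserts the Lipschitz bound with $\barb_\Xi=\sqrt{M}\,b_\Xi$. Your objection is exactly right. A second-moment bound on $\nabla g^{i,(n)}_\Xi$ does not control the $L_2$ norm of $\nabla g^{i,(n)}_\Xi(\xi)^\transpose(y-y')$ when $y-y'$ is random. The paper never assumes bounded support, so this is not ``implicit'' there; it is a gap in its proof.

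In fact (iii) is false under (A1)--(A3) alone. Take $d=m=q=M=1$, $F=G=1$, $D=0$, $\clU^{(n)}=[-1,1]$, $g^{1,(n)}_\Xi(y)=y^2/2$, and $\nu(u,\gamma)=\gamma\sin u$, with $\gamma$ Student-$t$ with three degrees of freedom, so that $\Expect[\gamma^2]<\infty=\Expect[\gamma^4]$. Then (A1)--(A3) hold with $L_g=1$ and $L_\nu^2=\sigma_\Delta^2=\Expect[\gamma^2]$. Yet for $u=1$, $u'=0$ the difference $\Xi(u,\Phi)-\Xi(u',\Phi')=\tfrac12(1+\gamma_0\sin 1)^2$ is not square integrable, while the right side of (iii) is finite. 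So an extra hypothesis is unavoidable, and you should state it as an added assumption rather than attribute it to the paper.

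Your first option (bounded support of $\nu(u,\gamma)$, uniformly in $u$) works. It also needs $\sup_{n,i}\|\nabla g^{i,(n)}_\Xi(0)\|<\infty$, which the paper uses tacitly through its constant $b^\bullet$. Recentering cannot supply this, since it is a property of the given functions on the output range.

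Your second option is not available from the paper's setting. Nothing confines $y=Fu+G\Phi+Dr_n$ to a compact set; the paper's own experiment has Gaussian $\xi_n$. So this option reduces either to the first one or to assuming $\nabla g^{i,(n)}_\Xi$ uniformly bounded, i.e.\ globally Lipschitz constraint functions.

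A third repair, which does cover that experiment, is additive noise $\nu(u,\gamma)=Au+\gamma$. Then $y-y'$ is $\clF_n$-measurable, and integral Minkowski gives $\|\Xi_i^{(n)}(u,\Phi)-\Xi_i^{(n)}(u',\Phi')\|_{2,n}\le\|y-y'\|\sup_{t\in[0,1]}\|\nabla g^{i,(n)}_\Xi(y'+t(y-y'))\|_{2,n}$. In that case the paper's second-moment bounds are genuinely enough.
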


\begin{proof}
    Part (i). For each $n$, denote $y_n = h^{(n)}(u_n,\Phi)$ and $y'_n = h^{(n)}(u',\Phi')$.
	By the triangle inequality we have 
    \begin{equation}
    \| \nabla_u \Obj^{(n)}(u,\Phi) 
            -
            \nabla_u \Obj^{(n)}(u',\Phi') 
			\|_{2,n} 
            \leq  
            \epsy^a_n +  \epsy^b_n
\label{e:Lipsch_step}
    \end{equation}
    in which $
    \epsy^a_n =        \|\nabla_u \Obj_y^{(n)}(u,\Phi) 
            -
            \nabla_u \Obj_y^{(n)}(u',\Phi') \|_{2,n} $ and $\epsy^b_n = \|\nabla_u g_u^{(n)}(u,\Phi)  - \nabla_u g_u^{(n)}(u',\Phi') \|_{2,n} $. 
    Employing the chain rule we obtain,
			$\epsy^a_n
			= 
			\|  F^\transpose [\nabla g^{(n)}_y(y_n) -  \nabla g^{(n)}_y(y_n') ]
            \|_{2,n}  \leq
			\| F^\transpose \| \| \nabla g^{(n)}_y(y_n) -  \nabla g^{(n)}_y(y_n')\|_{2,n}$.
    The Lipschitz continuity conditions in (A1) 
    and the identity in \eqref{e:Phi_Lip} give 
    $\| \nabla g^{(n)}_y(y_n) -  \nabla g^{(n)}_y(y_n') \|_{2,n} 
    \leq  L_g    (\| F\|  \| u-u'\|  + \|G\| \|\Phi - \Phi' \|_{2,n} ) $, and $\| \nabla_u g^{(n)}_u(u,\Phi) -  \nabla_u g^{(n)}_u(u',\Phi') \|_{2,n}  
    \leq   L_g( \| u - u'\| + \|\Phi - \Phi' \|_{2,n} )$. The final result then follows by substitution of  the above bounds into \eqref{e:Lipsch_step} with $L_\Obj = 2 L_g \max\{\|F \|^2 ,\|F^\transpose\|\|G \|,1\}$.

Part (ii) follows from the same steps as the ones employed in part (i), resulting in $L_\Xi = \|F^\transpose \|  L_g \max\{\|F \| ,\|G \| \}$.

Part (iii). Under (A1)--(A3), $\nabla g^{i,(n)}_\Xi$ is 
continuous with Lipschitz constant $L_g$, and $h^{(n)}$ is 
linear. Since $u \in \clU^{(n)}$ is bounded, 
$\| \nu(u,\gamma_0)\|_2 \leq \sigma_\Delta$  by (A3), and 
$\|r_n\| \leq \sigma_\Delta$, 
$\nabla_u \Xi_i^{(n)}(u, \Phi) = F^\transpose 
\nabla g^{i,(n)}_\Xi(h^{(n)}(u,\Phi))$ and $\nabla_\Phi \Xi_i^{(n)}(u, \Phi) = G^\transpose 
\nabla g^{i,(n)}_\Xi(h^{(n)}(u,\Phi))$ are bounded in second moment: $\exists~b_\Xi < \infty$ such that 
$\|\nabla_u \Xi_i^{(n)}(u,\Phi)\|_{2,n} + \|\nabla_\Phi \Xi_i^{(n)}(u,\Phi)\|_{2,n} \leq b_\Xi$ for all 
$u \in \clU^{(n)}$. Lipschitz continuity then 
follows from the mean value theorem with $\barb_\Xi = \sqrt{M} b_\Xi$.
\end{proof}





\smallskip
\whamit{Proof of parts (i) and (ii) of \Cref{t:facts}.}
We begin the proof with the recognition that under (A1), $\Obj^{(n)}$ and $\Xi^{(n)}$ are convex in their first variable.
In view of this observation and the result in \Cref{t:facts_f1}, the proof of parts (i) and (ii) follows from similar steps as the ones employed in \cite[Lemma 3.4]{kosneduda11}. In particular, $L_ \Uppsi = \sqrt{2} \sqrt{(L_\Obj + \barb_\clH \sqrt{M} L_\Xi+ \barb_\Xi + \mu )^2
+ 
( \barb_\Xi + \eta)^2}$ and $\mu_\Uppsi = \min\{\mu,\eta\}$.
\hfill
\qed



Next, we establish the remaining parts  of \Cref{t:facts}.
    \begin{lemma}
    \label[lemma]{t:facts_f3}
    Under (A1)--(A3), there exists a constant $\sigma_\clL$ such that the following holds for each $n$ and each $1 \leq j\leq M$:
    \whamrm{(i)}
	 	$ \displaystyle
	 	\Expect[\|  \nabla_u \Obj^{(n)}(u_n,\Phi_{n}) -
         \Expect[ \nabla_u \Obj^{(n)}(u_n,\Phi_{n})]
        \|^2 \mid \clF_n]
	 	\leq 
	 	\sigma^2_\clL
	 	$,
        \whamrm{(ii)} 
        $ \displaystyle
	 	\Expect[\|  \lambda_j \nabla_u \Xi_j^{(n)}(u_n,\Phi_{n}) -
         \Expect[  \lambda_j \nabla_u \Xi_j^{(n)}(u_n,\Phi_{n})]
        \|^2 \mid \clF_n]
	 	\leq 
	 	\sigma^2_\clL
	 	$,
            \whamrm{(iii)}
            	$ \displaystyle
	 	\Expect[\|  \Xi^{(n)}(u_n,\Phi_{n}) -
         \Expect[ \Xi^{(n)}(u_n,\Phi_{n})]
        \|^2 \mid \clF_n]
	 	\leq 
	 	\sigma^2_\clL
	 	$.
    \end{lemma}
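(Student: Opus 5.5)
The three bounds share one mechanism. I will first use the elementary fact that a conditional variance is at most the conditional second moment. For any $\clF_n$-measurable centering, $\Expect[\|X-\Expect[X\mid\clF_n]\|^2\mid\clF_n]\le \Expect[\|X-c\|^2\mid\clF_n]$ for every $\clF_n$-measurable $c$. I read the inner expectations in the statement as conditional on $\clF_n$, since $u_n,\lambda_n$ are $\clF_n$-measurable.

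It then suffices to show that each random quantity is bounded in conditional second moment, uniformly in $n$. I will use $\Phi_n=\nu(u_n,\gamma)$ with $\gamma\sim\uppi$ independent of $\clF_n$. By (A3), $\Expect[\|\Phi_n\|^2\mid\clF_n]\le\sigma_\Delta^2$, $\|r_n\|\le\sigma_\Delta$, and $\|u_n\|\le\barb_\clU$ by (A2). Hence $y_n=Fu_n+G\Phi_n+Dr_n$ satisfies
\[
\|y_n\|_{2,n}\le \|F\|\barb_\clU+\|G\|\sigma_\Delta+\|D\|\sigma_\Delta=:b_y .
\]

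For part (i), write $\nabla_u\Obj^{(n)}(u_n,\Phi_n)=F^\transpose\nabla g_y^{(n)}(y_n)+\nabla_u g_u^{(n)}(u_n,\Phi_n)$. The Lipschitz conditions in (A1) give $\|\nabla g_y^{(n)}(y_n)\|\le\|\nabla g_y^{(n)}(0)\|+L_g\|y_n\|$. They likewise give $\|\nabla_u g_u^{(n)}(u_n,\Phi_n)\|\le\|\nabla_u g_u^{(n)}(0,0)\|+L_g(\|u_n\|+\|\Phi_n\|)$.

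Part (iii) follows the same way. Each component satisfies $|g_\Xi^{i,(n)}(y_n)|\le|g_\Xi^{i,(n)}(0)|+\|\nabla g_\Xi^{i,(n)}(0)\|\|y_n\|+\tfrac{L_g}{2}\|y_n\|^2$ by the descent lemma. This is the one place where a quadratic term appears. Alternatively, one can take $c=\Xi^{(n)}(u_n,\nu(u_n,\gamma'))$ and invoke \Cref{t:facts_f1}(iii) with $u=u'$, which gives a bound $\barb_\Xi\|\Phi_n-\Phi'\|_{2,n}\le 2\barb_\Xi\sigma_\Delta$. I would use this second route to avoid fourth moments.

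For part (ii), $\lambda_{n,j}$ is $\clF_n$-measurable with $0\le\lambda_{n,j}\le\barb_\clH$ by (A2). The variance of $\lambda_j\nabla_u\Xi_j^{(n)}$ is therefore at most $\barb_\clH^2$ times the variance of $\nabla_u\Xi_j^{(n)}(u_n,\Phi_n)$. That variance is handled with the same coupling trick, using \Cref{t:facts_f1}(ii): it is at most $L_\Xi^2\|\Phi_n-\Phi'_n\|_{2,n}^2\le 4L_\Xi^2\sigma_\Delta^2$.

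The same coupling applied through \Cref{t:facts_f1}(i) gives part (i) directly, with variance at most $4L_\Obj^2\sigma_\Delta^2$. This avoids the need for uniform bounds on $\nabla g(0)$ across $n$, which are not assumed. Finally, I set $\sigma_\clL^2$ to the maximum of $4L_\Obj^2\sigma_\Delta^2$, $4\barb_\clH^2L_\Xi^2\sigma_\Delta^2$ and $4\barb_\Xi^2\sigma_\Delta^2$.

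\textbf{Main obstacle.} The key step is justifying the coupling. Let $\gamma'$ be an independent copy of $\gamma$, and set $\Phi'_n=\nu(u_n,\gamma')$. For $\clF_n$-measurable $u_n$, the conditional variance then equals $\tfrac12\Expect[\|X-X'\|^2\mid\clF_n]$, where $X'$ is the same quantity evaluated at $\Phi'_n$. In addition, $\|\Phi_n-\Phi'_n\|_{2,n}\le\|\Phi_n\|_{2,n}+\|\Phi'_n\|_{2,n}\le2\sigma_\Delta$.

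This requires \Cref{t:facts_f1} to remain valid when $\Phi$ and $\Phi'$ are built from different seeds, with the same $u$. Its proof uses only the pointwise Lipschitz bounds in (A1) and conditional $L_2$ norms, so it does not depend on a shared $\gamma_0$, and I expect this step to go through.
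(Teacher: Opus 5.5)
For parts (i) and (ii) your argument is correct and takes a different route from the paper. Part (iii) has a genuine gap. The gap is not yours alone: (iii) cannot be proved from (A1)--(A3) as stated.

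\textbf{Parts (i) and (ii).} The paper bounds each quantity in conditional second moment, $\|X\|_{2,n}\le b$, through growth estimates, and then uses $\|X-\Expect[X\mid\clF_n]\|_{2,n}\le 2\|X\|_{2,n}$. This relies on an unexplained constant $b^\bullet$. In effect, $b^\bullet$ bounds $\nabla g_y^{(n)}$, $\nabla_u g_u^{(n)}$, $\nabla g_\Xi^{i,(n)}$ (and, for (iii), $g_\Xi^{i,(n)}$) at a reference point uniformly in $n$, which (A1) does not supply.

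Your independent-copy identity $\Expect[\|X-\Expect[X\mid\clF_n]\|^2\mid\clF_n]=\tfrac12\Expect[\|X-X'\|^2\mid\clF_n]$, with $X'$ built from $\Phi_n'=\nu(u_n,\gamma')$, involves only differences. The bounds you need are
$\|\nabla_u\Obj^{(n)}(u_n,\Phi_n)-\nabla_u\Obj^{(n)}(u_n,\Phi_n')\|\le L_g(\|F\|\|G\|+1)\|\Phi_n-\Phi_n'\|$ and
$\|\nabla_u\Xi_j^{(n)}(u_n,\Phi_n)-\nabla_u\Xi_j^{(n)}(u_n,\Phi_n')\|\le L_g\|F\|\|G\|\|\Phi_n-\Phi_n'\|$.
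These are genuinely pointwise consequences of (A1), so changing the seed is harmless. Combined with $\|\Phi_n-\Phi_n'\|_{2,n}\le 2\sigma_\Delta$ and $0\le\lambda_{n,j}\le\barb_\clH$, they prove (i) and (ii) with constants independent of $n$, $\barb_\clU$ and $\|D\|$. Here your argument is cleaner than the paper's.

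\textbf{The gap in part (iii).} You claim \Cref{t:facts_f1} survives independent seeds because its proof ``uses only the pointwise Lipschitz bounds in (A1)''. That is false for item (iii). (A1) makes $\nabla g_\Xi^{i,(n)}$ Lipschitz, not $g_\Xi^{i,(n)}$ itself. With $y=h^{(n)}(u,\Phi)$ and $y'=h^{(n)}(u,\Phi')$, pointwise you only get
$|\Xi_i^{(n)}(u,\Phi)-\Xi_i^{(n)}(u,\Phi')|\le\bigl(\|\nabla g_\Xi^{i,(n)}(y')\|+\tfrac{L_g}{2}\|y-y'\|\bigr)\|G\|\|\Phi-\Phi'\|$.
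This is a product of random factors. Its second moment involves fourth moments of $\Phi$ and a reference value of $\nabla g_\Xi^{i,(n)}$. So the coupling does not avoid the quadratic growth you spotted in the descent-lemma route; it only hides it inside $\barb_\Xi$. The paper's mean-value argument for \Cref{t:facts_f1}(iii) has the same defect: it bounds the gradient at an intermediate random point times a random increment as if the $L_2$ norm of a product were at most the product of $L_2$ norms. Its constant $b^\diamond$ inherits this.

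In fact (iii) does not follow from (A1)--(A3) at all.
\begin{itemize}
\item Take $M=m=q=1$, $F=D=0$, $G=1$, $g_\Xi(y)=\tfrac12y^2$, and $\nu(u,\gamma)=\gamma$, where $\gamma$ has finite variance but infinite fourth moment. All of (A1)--(A3) hold with $L_\nu=0$, yet $\Xi(u_n,\Phi_n)=\tfrac12\gamma^2$ has infinite conditional variance.
\item Take instead $g_\Xi^{(n)}(y)=a_ny$ with $|a_n|\to\infty$. The variance is then unbounded in $n$, even for Gaussian $\gamma$.
\end{itemize}

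To close (iii) you must add a hypothesis. One option is $\sup_{n,i,y}\|\nabla g_\Xi^{i,(n)}(y)\|\le\ell<\infty$; your coupling then immediately gives the bound $2M\ell^2\|G\|^2\sigma_\Delta^2$. Another is a uniform fourth-moment bound on $\nu(u,\gamma)$ together with $\sup_{n,i}\|\nabla g_\Xi^{i,(n)}(0)\|<\infty$.
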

    \begin{proof}
	Let $\Phi^u_{n} = \nu(u_n,\gamma_0)$, $y^u_n = h^{(n)}(u_n,\Phi^u_{n})$.
	Using the triangle inequality and the  assumed Lipschitz continuity of $\nabla g^{(n)}_y$ in (A1), we have the following for a constant $b^\bullet$,
\[\begin{aligned}
		\|  \nabla_u \Obj^{(n)}(u_n,\Phi^u_{n}) \|_{2,n}
		&
		\leq 
		\| F^\transpose \nabla g^{(n)}_y(y^u_n) +\nabla_u g^{(n)}_u(u_n,\Phi^u_n) \|_{2,n}
		\\
        &\leq
        b^\bullet (
        \| F^\transpose\|+1)
        +
        L_g(\| F^\transpose\|
        \|  y^u_n\|_{2,n}
     +\|  u_n\|_{2,n}
        +
         \|  \Phi^u_n\|_{2,n})
		\leq b^\circ
        %
	\end{aligned}
	\]
   with $b^\circ = b^\bullet (
        \| F^\transpose\|+1) + \barb_\clU L_g ( \| F\|^2+1)
+
\sigma_\Delta L_g ( \| F^\transpose\| \|G  \| + \| F^\transpose\| \|D\| + 1)$. The last inequality follows from the fact that $u_n$ is $\clF_n$-measurable along with the assumed system model \eqref{e:model_lin} and the bounds $\|u_n\| \leq \barb_\clU$ and $\|\nu(u_n,\gamma_0)\|_2  \leq \sigma_\Delta$ in (A2)--(A3).

    Applying similar steps to the quantities in parts (ii) and (iii), we obtain the upper bounds: $\| \lambda_j \nabla_u \Xi_j^{(n)}(u_n,\Phi^u_{n}) \|_{2,n} 
\leq b^\triangle$ and $\|  \Xi^{(n)}(u_n,\Phi^u_{n}) \|_{2,n} \leq b^\diamond$, in which $b^\triangle = \barb_\clH\| F^\transpose\|[b^\bullet + L_\Xi( \| F\|\barb_\clU +    \| G \|\sigma_\Delta +    \| D \|\sigma_\Delta)]$ and $b^\diamond = [b^\bullet + \barb_\Xi( \| F \|\barb_\clU +    \| G\|\sigma_\Delta +    \| D\|\sigma_\Delta)]$
for a potentially larger constant $b^\bullet$.

    Applications of Jensen's inequality and the triangle inequality complete the proof for $\sigma_\clL = 2 \max\{ b^\circ, b^\triangle, b^\diamond  \}$. 
    \end{proof}

\smallskip
\whamit{Proof of parts (iii) and (iv) of \Cref{t:facts}.}
The triangle inequality gives $\|   \nabla \Uppsi^{(n)}(z_n,\Phi_{n}) -
         \Expect[ \nabla \Uppsi^{(n)}(z_n,\Phi_{n})]
        \|_{2,n} \leq \epsy^a_n + \epsy^b_n + \epsy^c_n +\epsy^d_n$,  
    where
$
\epsy^a_n   =\|  \nabla_u \Obj^{(n)}(u_n,\Phi_{n}) -
         \Expect[ \nabla_u \Obj^{(n)}(u_n,\Phi_{n})]
        \|_{2,n}
$, 
$
\epsy^b_n  = \sum_{j=1}^M  \| \lambda_j \nabla_u \Xi_j^{(n)}(u_n,\Phi_{n}) -
         \Expect[ \lambda_j \nabla_u \Xi_j^{(n)}(u_n,\Phi_{n})]
        \|_{2,n}
$,
$
\epsy^c_n =\|  \Xi^{(n)}(u_n,\Phi_{n}) -
         \Expect[ \Xi^{(n)}(u_n,\Phi_{n})]
        \|_{2,n}
$,
$
\epsy^d_n  = \mu \| u_n -\Expect[u_n]  \|_{2,n}+  \eta \| \lambda_n -\Expect[\lambda_n]  \|_{2,n} 
$.
Since $ \| \lambda_n \| \leq \barb_\clH$ and $ \| u_n \| \leq \barb_\clU$, Jensen's and the triangle inequality give $\epsy^d_n\leq 2\max\{\mu,\eta  \} b^{\clU,\clH}$. The proof of part (iii) is complete upon applying  \Cref{t:facts_f3} which yields $\sigma_\Uppsi = (M+2)\sigma_\clL +2\max\{\mu,\eta  \} b^{\clU,\clH}$.

Part (iv). The Lipschitz bound in part (i) yields that for all $z,z' \in \clU^{(n)} \times \clH^{(n)} $, $\Expect[\|\nabla \Uppsi^{(n)}(z,\Phi)   -  \nabla \Uppsi^{(n)}(z,\Phi')   \|^2 ]
		\leq 
		L_\Uppsi^2 \Expect[\| \Phi - \Phi' \|^2]$. 
The proof is complete upon applying Jensen's inequality to the left hand side of the above equation along with the assumed Lipschitz bound \eqref{e:Phi_Lip}.
\hfill \qed



		The following shorthand notation will be used throughout the remainder of this section: let $\clE_n = \haUppsi^{(n)} -   \nabla \Uppsi^{(n)}(z_{n},\Phi_{n})$ and $\clM_n  \eqdef \clM^a_n +  \clM^b_n+\clM^c_n $, in which $ \clM^a_n  \eqdef  \nabla \Uppsi^{(n)} (z_{n},\Phi_{n}) -   \Expect[\nabla \Uppsi^{(n)} (z_{n},\Phi_{n})]$, $\clM^b_n  \eqdef     \Expect[\nabla \Uppsi^{(n)} (z_{n},\Phi_{n})]-   \Expect[\nabla \Uppsi^{(n)} (z_{n},\PhiPO_{n})]
        $, and   
	\[
		 \clM^c_n  
         \eqdef \Expect[\nabla \Uppsi^{(n)} (z_{n},\PhiPO_{n})]  - \begin{bmatrix}
		     \nabla_u \Expect[\clL(\uPO_{n},\lambdaPO_n,\PhiPO_{n})]
             \\
             - \nabla_\lambda\Expect[\clL(\uPO_{n},\lambdaPO_n,\PhiPO_{n})]
		 \end{bmatrix} \, .
	\]
    
The next lemma bounds the term 
$\Expect[\clM_n^\transpose \tilz_n \mid \clF_{n}]$, which plays a crucial role in obtaining a contraction for the MSE of estimates.    
\begin{lemma}
		\label[lemma]{t:cross}
		Under (A1)--(A4), it follows that 
        \whamrm{(i)}
        $ \displaystyle
\Expect[{\clM^c_n}^\transpose \mid \clF_{n}] =\Expect[\nabla \Uppsi^{(n)} (z_{n},\PhiPO_{n})] - \Expect[\nabla \Uppsi^{(n)} (\zPO_{n},\PhiPO_{n})] $,

                \whamrm{(ii)}
        $ \displaystyle
        \Expect[\clM_n^\transpose \tilz_n \mid \clF_{n}] \geq \mu^e_\Uppsi \| \tilz_n\|^2
        $, in which $\mu^e_\Uppsi=\mu_\Uppsi - L_\nu L_\Uppsi$.
	\end{lemma}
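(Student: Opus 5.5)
Part (i) follows from the performative stability of $\zPO_n$. Since $\zPO_n$ is a saddle point of the expected Lagrangian under the \emph{fixed} distribution $\clD(\uPO_n)$, it is deterministic given $\clF_n$. The bracketed vector in $\clM^c_n$ therefore equals $\nabla\barUppsi^{(n)}(\zPO_n;\uPO_n)$. Because the regularized Lagrangian has gradients that are dominated in second moment (\Cref{t:facts_f3}), we may interchange differentiation and expectation:
\[
\nabla_u \Expect[\clL(\uPO_n,\lambdaPO_n,\PhiPO_n)] = \Expect[\nabla_u\clL(\uPO_n,\lambdaPO_n,\PhiPO_n)],
\]
and likewise for $\lambda$. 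This gives $\Expect[\nabla\Uppsi^{(n)}(\zPO_n,\PhiPO_n)]$. The first term of $\clM^c_n$ is already an expectation. Taking $\Expect[\cdot\mid\clF_n]$ leaves both terms unchanged, since $z_n$ is $\clF_n$-measurable and $\PhiPO_n$ is independent of $\clF_n$. This yields (i).

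For part (ii) I split $\Expect[\clM_n^\transpose\tilz_n\mid\clF_n]$ into the three pieces. For $\clM^a_n$, we have $\Expect[\clM^a_n\mid\clF_n]=0$ when the inner expectation is read as conditional on $\clF_n$, i.e.\ with respect to $\Phi_n\sim\clD(u_n)$. Since $\tilz_n$ is $\clF_n$-measurable, this contribution vanishes.

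For $\clM^c_n$, part (i) gives $\nabla\barUppsi^{(n)}(z_n;\uPO_n)-\nabla\barUppsi^{(n)}(\zPO_n;\uPO_n)$. Both $z_n$ and $\zPO_n$ lie in $\clU^{(n)}\times\clH^{(n)}$; here I need $\zPO_n$ in the domain where \Cref{t:facts}(ii) applies, and if necessary I will note that the monotonicity argument from \cite{kosneduda11} holds on the larger set $\clU^{(n)}\times(\clH^{(n)}\cup\Lambda^{(n)})$ with the constant unchanged. Strong monotonicity, \Cref{t:facts}(ii), with second argument $u=\uPO_n$ then gives $\Expect[{\clM^c_n}^\transpose\tilz_n\mid\clF_n]\ge\mu_\Uppsi\|\tilz_n\|^2$.

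The remaining piece, $\clM^b_n=\nabla\barUppsi^{(n)}(z_n;u_n)-\nabla\barUppsi^{(n)}(z_n;\uPO_n)$, is the genuinely decision-dependent term and the main obstacle. I will lower-bound it by Cauchy--Schwarz,
\[
{\clM^b_n}^\transpose\tilz_n\ge-\|\clM^b_n\|\,\|\tilz_n\|.
\]
By \Cref{t:facts}(iv),
\[
\|\clM^b_n\|\le L_\Uppsi L_\nu\|u_n-\uPO_n\|\le L_\Uppsi L_\nu\|\tilz_n\|,
\]
since the primal error is a subvector of $\tilz_n$. The care needed is to couple $\Phi_n=\nu(u_n,\gamma_0)$ and $\PhiPO_n=\nu(\uPO_n,\gamma_0)$ with the same $\gamma_0$, so that the mean-square Lipschitz bound \eqref{e:Phi_Lip} applies conditionally on $\clF_n$. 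This is exactly the setting fixed at the start of the section.

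Summing the three contributions gives
\[
\Expect[\clM_n^\transpose\tilz_n\mid\clF_n]\ge(\mu_\Uppsi-L_\Uppsi L_\nu)\|\tilz_n\|^2=\mu^e_\Uppsi\|\tilz_n\|^2.
\]
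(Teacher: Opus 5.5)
Your proposal is correct and follows the paper's proof essentially step for step. Part (i) interchanges differentiation and expectation under the second-moment domination from \Cref{t:facts_f3}. Part (ii) drops the zero-mean $\clM^a_n$ term, applies strong monotonicity (\Cref{t:facts}(ii)) to $\clM^c_n$, and uses Cauchy--Schwarz with \Cref{t:facts}(iv) on $\clM^b_n$. Your remark that strong monotonicity with $\mu_\Uppsi=\min\{\mu,\eta\}$ holds for all $\lambda\ge 0$ usefully covers the case $\lambdaPO_n\in\Lambda^{(n)}\not\subseteq\clH^{(n)}$, which the paper passes over silently.
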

	\begin{proof}
    Part (i) follows from the dominated convergence theorem: from \Cref{t:facts}~(i), the mapping $\nabla \Uppsi$ is   Lipschitz continuous in quadratic mean and the steps in the proof of \Cref{t:facts_f3} lead to the bound: $\| \nabla \Uppsi(\zPO_n,\PhiPO_n)\|_{2,n}\leq b$, in which $b$ is a constant. Thus, the sequence is dominated and the order of expectation and differentiation can be exchanged in the second term of $\clM^c_n$.
    
    The proof of part (ii) begins with the observation: $
\Expect[\clM_n^\transpose \tilz_n \mid \clF_{n}] =\Expect[(\clM^b_n+ \clM^c_n)^\transpose \mid \clF_{n}] \tilz_n  \geq \Expect[{\clM^c_n}^\transpose \mid \clF_{n}] \tilz_n - |\Expect[{\clM^b_n}^\transpose  \mid \clF_{n}]\tilz_n |
$,
which holds since $\tilz_n$ is $\clF_n$-measurable and $\Expect[\clM^a_n \mid \clF_n]=0$ by definition. The proof is complete upon applying parts (ii) and (iv) of \Cref{t:facts} to bound the right hand side.
	\end{proof}

\begin{lemma}
		\label[lemma]{t:one_step_couple_haobj}
		Under  the assumptions of \Cref{t:online_tracking_result_limsup}, the following holds
		\[
		\Expect[\|  z_{n+1} - \clG^{(n)}(\clU,\clH,\zPO) \|^2  \mid \clF_{n} ]
		\leq 
		\Upupsilon_\alpha \| \tilz_{n} \|^2
		+   \clQ_\alpha
		\]
         in which 
$
         \clQ_\alpha  = 4\alpha^2( b^2_{\haUppsi} \epsy^2_m + \sigma_\Uppsi^2) + 4 \alpha b_{\haUppsi} \epsy_m b^{\clU,\clH}
    $
      and 
      $\Upupsilon_\alpha = 1 - 2 \alpha \mu^e_\Uppsi + 4 \alpha^2 L_\Uppsi^2(1+L_\nu^2)$ with $\mu^e_\Uppsi =  \mu_\Uppsi - L_\Uppsi L_\nu$.
	\end{lemma}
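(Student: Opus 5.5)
The plan is to compare the two projected points directly, using that both are projections onto the same convex set $\clU^{(n)} \times \clH^{(n)}$. By \eqref{e:PSGD_noreg_online}, $z_{n+1} = \Proj_{\clU^{(n)}\times\clH^{(n)}}\{z_n - \alpha \nabla\haUppsi^{(n)}\}$. By definition, $\clG^{(n)}(\clU,\clH,\zPO)$ is the projection of $\zPO_n - \alpha \nabla\barUppsi^{(n)}(\zPO_n;\uPO_n)$ onto the same set, where the stacked gradient of $\barclL$ is written as $\nabla\barUppsi^{(n)}(\zPO_n;\uPO_n)$ using \Cref{t:cross}~(i). Nonexpansiveness of the Euclidean projection onto a closed convex set then gives, pointwise,
\[
\| z_{n+1} - \clG^{(n)}(\clU,\clH,\zPO)\|^2 \le \big\| \tilz_n - \alpha\big(\clE_n + \clM_n\big)\big\|^2 ,
\]
since by construction $\nabla\haUppsi^{(n)} - \nabla\barUppsi^{(n)}(\zPO_n;\uPO_n) = \clE_n + \clM^a_n + \clM^b_n + \clM^c_n$ (the sum telescopes).

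Next I expand the square and take conditional expectations given $\clF_n$, using that $\tilz_n$ is $\clF_n$-measurable. This produces three pieces.

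\emph{Contraction cross term.} The term $-2\alpha\,\Expect[\clM_n^\transpose\tilz_n\mid\clF_n]$ is bounded above by $-2\alpha\mu^e_\Uppsi\|\tilz_n\|^2$ directly from \Cref{t:cross}~(ii).

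\emph{Measurement cross term.} For $-2\alpha\,\Expect[\clE_n^\transpose\tilz_n\mid\clF_n]$, I use Cauchy--Schwarz and Jensen together with \Cref{t:estimation_haclL} to get the bound $2\alpha b_{\haUppsi}\epsy_m\|\tilz_n\|$. Then $\|\tilz_n\|$ is bounded by a diameter-type constant $2b^{\clU,\clH}$, since $z_n \in \clU^{(n)}\times\clH^{(n)}$ and $\zPO_n$ is bounded by (A2). This yields the term $4\alpha b_{\haUppsi}\epsy_m b^{\clU,\clH}$ in $\clQ_\alpha$.

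\emph{Quadratic term.} For $\alpha^2\Expect[\|\clE_n+\clM^a_n+\clM^b_n+\clM^c_n\|^2\mid\clF_n]$, I use $\|\sum_{k=1}^4 x_k\|^2\le 4\sum_{k=1}^4\|x_k\|^2$ and bound each summand separately:
\begin{itemize}
\item $\|\clE_n\|_{2,n}^2\le b_{\haUppsi}^2\epsy_m^2$ by \Cref{t:estimation_haclL};
\item $\|\clM^a_n\|_{2,n}^2\le\sigma_\Uppsi^2$ by \Cref{t:facts}~(iii);
\item $\|\clM^b_n\|^2\le L_\Uppsi^2L_\nu^2\|u_n-\uPO_n\|^2\le L_\Uppsi^2L_\nu^2\|\tilz_n\|^2$ by \Cref{t:facts}~(iv);
\item $\|\clM^c_n\|^2\le L_\Uppsi^2\|\tilz_n\|^2$, which follows from \Cref{t:cross}~(i) combined with Jensen and \Cref{t:facts}~(i) applied with a common $\Phi = \PhiPO_n$.
\end{itemize}
Collecting terms gives $\Upupsilon_\alpha\|\tilz_n\|^2 + \clQ_\alpha$ with exactly the stated constants.

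The main obstacle I expect is the measurement cross term. $\clE_n$ is not conditionally mean zero, so it cannot be absorbed into a martingale-difference term. It has to be handled by the crude bound $\|\tilz_n\|\le 2b^{\clU,\clH}$, and that requires $\lambdaPO_n$ to be controlled in norm by the same constant. If $\lambdaPO_n \notin \clH^{(n)}$ this is delicate, and I would either invoke the uniform bound on the $\Lambda^{(n)}$ or reinterpret $b^{\clU,\clH}$ accordingly. A secondary point is the justification of \Cref{t:cross}~(i) for interchanging the derivative and the expectation in $\clM^c_n$, which I take as given.
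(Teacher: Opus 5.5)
Your proposal is correct and follows the paper's proof essentially step for step. Both arguments use the same ingredients:
\begin{itemize}
\item nonexpansiveness of the projection onto the common set;
\item the telescoping decomposition into $\clE_n+\clM^a_n+\clM^b_n+\clM^c_n$;
\item \Cref{t:cross}~(ii) for the contraction term;
\item Cauchy--Schwarz with \Cref{t:estimation_haclL} and $\|\tilz_n\|\le 2b^{\clU,\clH}$ for the measurement term;
\item the factor-$4$ split for the quadratic term, using \Cref{t:facts}~(i),(iii),(iv) and \Cref{t:cross}~(i).
\end{itemize}
The caveat you raise is legitimate: if $\lambdaPO_n\notin\clH^{(n)}$, then $\|\tilz_n\|\le 2b^{\clU,\clH}$ need not hold. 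The paper takes this same step by simply citing (A2), without addressing that possibility.
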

\begin{proof}
Let $\beta_{n} =\zPO_{n}  - \alpha \begin{bmatrix}
		     \nabla_u \barclL(\uPO_n,\lambdaPO_n) 
             \\
             - \nabla_\lambda\barclL(\uPO_n,\lambdaPO_n) 
		 \end{bmatrix}$.
Using the non-expansiveness property of the projection operator, we have
		\[
		\begin{aligned}
			& \|  z_{n+1} - \clG^{(n)}(\clU,\clH,\zPO)  \|^2
			\leq 
			\| z_{n}  - \alpha \haUppsi^{(n)}  -\beta_{n} \|^2
			\\
			& =
			\| \tilz_{n} \|^2+ \alpha^2 \|\clE_{n}  + \clM_{n} \|^2 -2\alpha [  {\clE_{n}}  + {\clM_{n}}]^\transpose \tilz_{n}
            \\
            & \leq
			\| \tilz_{n}\|^2+ \alpha^2 \|\clE_{n}  + \clM_{n} \|^2 -2\alpha {\clM_{n}}^\transpose \tilz_{n}
            +
            2\alpha| {\clE_{n}}^\transpose \tilz_{n}|
		\end{aligned}
		\]

		Upon taking conditional expectations of both sides, the middle term is upper bounded as follows:
		\[ \begin{aligned}
\|\clE_{n}  + \clM_{n} \|^2_{2,{n}} &\leq 4 (\|\clE_{n} \|^2_{2,{n}} + \| \clM^a_{n} \|^2_{2,{n}} + \| \clM^b_{n} \|^2_{2,{n}} + \| \clM^c_{n} \|^2_{2,{n}})
\\
& \leq 
 4(b^2_{\haUppsi} \epsy^2_m + \sigma_\Uppsi^2) + 4 L_\Uppsi^2(1+L_\nu^2)\|\tilz_{n}\|^2
		\end{aligned}
        \]
        in which the last bound is obtained using  \Cref{t:estimation_haclL}, \Cref{t:cross}~(i), parts (i), (iii) and (iv) of \Cref{t:facts}, and the fact that $\| \tilu_{n}\| \leq \| \tilz_{n}\|$ for each $n$.
        
		It remains to bound the last two terms. The Cauchy-Schwarz inequality gives $|\Expect[\clE_{n}^\transpose \tilz_{n} 
        \mid \clF_{n}]| \leq \| \clE_{n}\|_{2,{n}} \|\tilz_n\| \leq 2 b_{\haUppsi} \epsy_m b^{\clU,\clH}$ where the last bound follows from \Cref{t:estimation_haclL} and the boundedness condition in (A2). Applying \Cref{t:cross}~(ii) to bound $\Expect[\clM_{n}^\transpose \tilz_{n} 
        \mid \clF_{n}]$ completes the proof. 
	\end{proof}

    The surrogate set mismatch enters through the following 
refinement of \Cref{t:one_step_couple_haobj}.
	
	\begin{lemma}
		\label[lemma]{t:one_step_clip}
		Under  the assumptions of \Cref{t:online_tracking_result_limsup}, the following holds:
		$
		\Expect[\|  z_{n+1} - \zPO_{n} \|^2  \mid \clF_{n} ]
		\leq 
		\Upupsilon_\alpha \| \tilz_{n} \|^2
		+ \clB_\alpha
$,
    in which $\clB_\alpha = \clQ_\alpha + \epsy_\clH^2 +  2\epsy_\clH b^{\clU,\clH}$ with 
    $\clQ_\alpha$ as defined by \Cref{t:one_step_couple_haobj} and $\epsy_\clH$ as given in \Cref{t:diam_bdd}.
    	\end{lemma}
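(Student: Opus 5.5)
The plan is to insert the intermediate point $\clG^{(n)}(\clU,\clH,\zPO)$ and compare it with $\zPO_n$ through $\clG^{(n)}(\clU,\Lambda,\zPO)$. The first step is to observe that $\zPO_n$ is a fixed point of the projected map onto the \emph{true} sets. Indeed, by definition $\zPO_n$ is the saddle point of the strongly convex--concave problem $\max_{\lambda\in\Lambda^{(n)}}\min_{u\in\clU^{(n)}}\barclL(u,\lambda)$ with the distribution frozen at $\clD(\uPO_n)$. The variational-inequality characterization of that saddle point, with gradients exchanged with expectations as justified in \Cref{t:cross}~(i), gives
\[
\zPO_n = \Proj_{\clU^{(n)}\times\Lambda^{(n)}}\{\beta_n\} = \clG^{(n)}(\clU,\Lambda,\zPO)
\]
for every $\alpha>0$, where $\beta_n$ is as in the proof of \Cref{t:one_step_couple_haobj}. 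Then \Cref{t:diam_bdd} yields the deterministic bound
\[
\|\clG^{(n)}(\clU,\clH,\zPO)-\zPO_n\|\le\epsy_\clH .
\]

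Next, write $z_{n+1}-\zPO_n = a_n + e_n$, with $a_n \eqdef z_{n+1}-\clG^{(n)}(\clU,\clH,\zPO)$ and $e_n \eqdef \clG^{(n)}(\clU,\clH,\zPO)-\zPO_n$. Expanding,
\[
\|z_{n+1}-\zPO_n\|^2=\|a_n\|^2+\|e_n\|^2+2a_n^\transpose e_n .
\]
After conditioning on $\clF_n$, the first term is bounded by $\Upupsilon_\alpha\|\tilz_n\|^2+\clQ_\alpha$ via \Cref{t:one_step_couple_haobj}, and the second by $\epsy_\clH^2$. For the cross term, $e_n$ is deterministic given $\clF_n$ and has norm at most $\epsy_\clH$. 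What remains is a uniform bound $\|a_n\|\le b^{\clU,\clH}$, which then gives $2|a_n^\transpose e_n|\le 2\epsy_\clH b^{\clU,\clH}$. Summing the three pieces produces exactly $\clB_\alpha=\clQ_\alpha+\epsy_\clH^2+2\epsy_\clH b^{\clU,\clH}$.

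The main obstacle is justifying the cross-term bound with the constant $b^{\clU,\clH}$ rather than $2b^{\clU,\clH}$. Both $z_{n+1}$ and $\clG^{(n)}(\clU,\clH,\zPO)$ lie in $\clU^{(n)}\times\clH^{(n)}$, so the crude diameter bound only gives $2\max\{\barb_\clU,\barb_\clH\}$ per block. I would first try to use the same convention as in the proof of \Cref{t:one_step_couple_haobj}, where $\|\tilz_n\|$ was bounded by a multiple of $b^{\clU,\clH}$, and absorb constants accordingly. If that does not work, I would use the structure of $e_n$ instead. By the form of the simplex-type sets in (A2), the difference of the two projections is nonzero only in the $\lambda$-block, and it points along the direction that shrinks $\lambda$ toward $\clH^{(n)}$. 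Pairing this with $a_n$ and using $\lambda\ge0$ and $\sum_i\lambda_i\le\barb_\clH$ for both points should give $|a_n^\transpose e_n|\le\epsy_\clH b^{\clU,\clH}$. Should neither route close, a factor of $2$ in front of $\epsy_\clH b^{\clU,\clH}$ would be absorbed, which leaves the structure of the bound unchanged.
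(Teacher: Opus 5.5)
Your skeleton is the paper's: the fixed-point identity $\zPO_n=\clG^{(n)}(\clU,\Lambda,\zPO)$, the split $z_{n+1}-\zPO_n=a_n+e_n$ (the paper's $\epsy^a_n$ and $-\epsy^b_n$), \Cref{t:one_step_couple_haobj} for $\|a_n\|^2$, and \Cref{t:diam_bdd} for $\|e_n\|^2$. You diverge only on the cross term, and your concern there is justified. The paper applies Cauchy--Schwarz with $\|\epsy^a_n\|\le 2b^{\clU,\clH}$, which, taken literally, gives $4\epsy_\clH b^{\clU,\clH}$ rather than the $2\epsy_\clH b^{\clU,\clH}$ in $\clB_\alpha$. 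In effect, the paper takes your fallback. Your first route cannot repair this, because the convention in \Cref{t:one_step_couple_haobj} is exactly $\|\tilz_n\|\le 2b^{\clU,\clH}$.

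Your second route does close, and it is the only one that yields the stated constant.

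\emph{Why $e_n$ is one-signed.} The two maps $\clG^{(n)}(\clU,\Lambda,\zPO)$ and $\clG^{(n)}(\clU,\clH,\zPO)$ project the same point onto product sets, so their $u$-blocks coincide. The projection of a point $w$ onto $\{\lambda\ge0:\mathbf 1^\transpose\lambda\le b\}$ is $(w-\tau_b\mathbf 1)_+$, where the threshold $\tau_b\ge0$ is nonincreasing in $b$. Hence the $\lambda$-block of $e_n$ is componentwise of one sign, namely that of $b^{(n)}_\clH-b^{(n)}_\Lambda$.

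\emph{The bound.} For every $\lambda\in\clH^{(n)}$, the scalar $\lambda^\transpose e_n$ has that fixed sign and satisfies $|\lambda^\transpose e_n|\le\|\lambda\|_1\|e_n\|\le\barb_\clH\epsy_\clH$. Apply this to $\lambda_{n+1}$ and to the $\lambda$-block of $\clG^{(n)}(\clU,\clH,\zPO)$, both of which lie in $\clH^{(n)}$. Subtracting two numbers of the same sign, each of magnitude at most $\barb_\clH\epsy_\clH$, gives $|a_n^\transpose e_n|\le\barb_\clH\epsy_\clH\le\epsy_\clH b^{\clU,\clH}$ pathwise.

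The sign structure is essential. Cauchy--Schwarz on the $\lambda$-block alone gives only $\sqrt2\,\barb_\clH\epsy_\clH$, since that set has Euclidean diameter $\sqrt2\,\barb_\clH$ when $M\ge2$. Your version therefore obtains the exact constant in $\clB_\alpha$ (and hence in $\rho_d$) by exploiting the simplex form of the dual sets. The paper's argument uses only boundedness and loses a factor of $2$.
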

	\begin{proof}
       The proof begins with the recognition that $\zPO_{n}$ solves the following fixed-point equation:
       $
       \zPO_{n} =  \clG^{(n)}(\clU,\Lambda,\zPO)
       $.
        Then, we have 
              \begin{equation}
       \|z_{n+1} - \zPO_{n}\|^2  =  
       \| \epsy^a_{n} \|^2  + 2(\epsy^a_{n})^\transpose \epsy^b_{n} + \|\epsy^b_{n} \|^2
       \label{e:2prod}
       \end{equation}
        in which $\epsy^a_{n}  =  z_{n+1} - \clG^{(n)}(\clU,\clH,\zPO)$ and $ \epsy^b_{n} =   \clG^{(n)}(\clU,\Lambda,\zPO)  - \clG^{(n)}(\clU,\clH,\zPO) $.

Upon taking conditional expectations of both sides of \eqref{e:2prod}, we bound the first  and last terms via \Cref{t:one_step_couple_haobj} and \Cref{t:diam_bdd}, respectively. To bound the remaining term, we apply the Cauchy-Schwarz inequality to obtain: $|\Expect[(\epsy^a_{n})^\transpose \epsy^b_{n}| \clF_{n}]| \leq \| \epsy^a_{n} \|_{2,{n}} \| \epsy^b_{n} \|_{2,{n}}$. 
The proof is complete after invoking \Cref{t:diam_bdd} 
and noting that, by (A2), 
$\| \epsy^a_{n}\| \leq 2 b^{\clU,\clH}$.
	\end{proof}


Let $\clR_n \eqdef (\zPO_{n-1} - \zPO_n)^\transpose 
(z_n - \zPO_{n-1})$. Next, we bound this 
cross-term, which arises from the time-variability of 
$\{\zPO_n\}$.

	\begin{lemma}
		\label[lemma]{t:middle_track}
		Under the assumptions of \Cref{t:online_tracking_result_limsup}, the following bound holds:
        \begin{small}
		\[
		|\Expect[\clR_n] | 
		\leq \barpsi \Big[
		\Upupsilon_\alpha^{n/2} \Expect[\| \tilz_0 \|]
            +
            \sqrt{\clB_\alpha} \frac{1 - \Upupsilon_\alpha^{n/2}}{1-\sqrt{\Upupsilon_\alpha}} + \barpsi \sqrt{\Upupsilon_\alpha}\frac{1 - \Upupsilon_\alpha^{(n-1)/2}}{1-\sqrt{\Upupsilon_\alpha}}
            \Big]
		\]
        \end{small}
		where $\clB_\alpha $ is given by \Cref{t:one_step_clip}.
	\end{lemma}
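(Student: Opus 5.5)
Since the increments of the stable points are bounded, $\|\zPO_{n-1}-\zPO_n\|\le\barpsi$, Cauchy--Schwarz gives
\[
|\Expect[\clR_n]| \le \barpsi\, \Expect[\|z_n-\zPO_{n-1}\|].
\]
So it suffices to bound the first moment $e_n\eqdef\Expect[\|z_n-\zPO_{n-1}\|]$ by
\[
\Upupsilon_\alpha^{n/2}\Expect[\|\tilz_0\|]+\sqrt{\clB_\alpha}\,\frac{1-\Upupsilon_\alpha^{n/2}}{1-\sqrt{\Upupsilon_\alpha}}+\barpsi\sqrt{\Upupsilon_\alpha}\,\frac{1-\Upupsilon_\alpha^{(n-1)/2}}{1-\sqrt{\Upupsilon_\alpha}}.
\]
I will obtain this from a linear recursion in first moments, built from \Cref{t:one_step_clip}.

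\textbf{Step 1: one-step first-moment contraction.} Apply conditional Jensen and then $\sqrt{a+b}\le\sqrt a+\sqrt b$ to \Cref{t:one_step_clip}:
\[
\Expect[\|z_{n+1}-\zPO_n\|\mid\clF_n]\le\sqrt{\Upupsilon_\alpha\|\tilz_n\|^2+\clB_\alpha}\le\sqrt{\Upupsilon_\alpha}\,\|\tilz_n\|+\sqrt{\clB_\alpha}.
\]
Taking total expectations gives $e_{n+1}\le\sqrt{\Upupsilon_\alpha}\,\Expect[\|\tilz_n\|]+\sqrt{\clB_\alpha}$. The step-size condition of \Cref{t:online_tracking_result_limsup} makes $4\alpha^2L_\Uppsi^2(1+L_\nu^2)<2\alpha\mu^e_\Uppsi$, so $\Upupsilon_\alpha<1$ and this is a genuine contraction (also $\Upupsilon_\alpha\ge0$, being a conditional expected square bound).

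\textbf{Step 2: pass from $\tilz_n$ back to $e_n$.} By the triangle inequality,
\[
\|\tilz_n\|\le\|z_n-\zPO_{n-1}\|+\|\zPO_{n-1}-\zPO_n\|\le\|z_n-\zPO_{n-1}\|+\barpsi,
\]
so for $n\ge1$
\[
e_{n+1}\le\sqrt{\Upupsilon_\alpha}\,e_n+\sqrt{\clB_\alpha}+\sqrt{\Upupsilon_\alpha}\,\barpsi .
\]
For the base case I use Step 1 directly with $n=0$, which gives $e_1\le\sqrt{\Upupsilon_\alpha}\,\Expect[\|\tilz_0\|]+\sqrt{\clB_\alpha}$ with no $\barpsi$ term. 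This is why the $\barpsi$ sum in the target has exponent $(n-1)/2$ rather than $n/2$.

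\textbf{Step 3: unroll.} Induction gives
\[
e_n\le\Upupsilon_\alpha^{n/2}\Expect[\|\tilz_0\|]+\sqrt{\clB_\alpha}\sum_{k=0}^{n-1}\Upupsilon_\alpha^{k/2}+\barpsi\sqrt{\Upupsilon_\alpha}\sum_{k=0}^{n-2}\Upupsilon_\alpha^{k/2}.
\]
Summing the geometric series yields exactly the bracket in the statement, and multiplying by $\barpsi$ finishes the proof.

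\textbf{Main obstacle.} The delicate point is the index bookkeeping around the reference point. \Cref{t:one_step_clip} measures $z_{n+1}$ against $\zPO_n$, while $\clR_n$ involves $z_n-\zPO_{n-1}$, so exactly one shift by $\barpsi$ is incurred per step. I must check that the base case produces the $(n-1)/2$ exponent and not $n/2$. If the indexing turns out off by one, the bound still holds up to replacing $(n-1)/2$ by $n/2$, and I would fall back to that.
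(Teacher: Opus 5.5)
Your proposal is correct and follows the paper's proof. The shared steps are: Cauchy--Schwarz against $\barpsi$, then \Cref{t:one_step_clip} with a square root, then one triangle-inequality shift by $\barpsi$ per step, then a geometric unrolling that stops at $\tilz_0$. That stopping point is exactly what produces the $(n-1)/2$ exponent, so your indexing is right and no fallback is needed.

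The only difference is that you run the recursion in unconditional first moments via conditional Jensen, whereas the paper runs it in the conditional norms $\|\cdot\|_{2,n-1}$. Your version makes the paper's ``repeating this process recursively'' step more transparent.

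One small fix: $\Upupsilon_\alpha\ge 0$ does not follow from its appearing in an upper bound on a square. It follows because the quadratic in $\alpha$ has negative discriminant, since $\mu^e_\Uppsi\le\mu_\Uppsi\le L_\Uppsi$.
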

\begin{proof}
We begin with the recognition that the Tower property of conditional expectations along with Jensen's inequality yield the upper bound: $|\Expect[\clR_n] | \leq \Expect[|\Expect[\clR_n \mid \clF_{n-1}]|]$.
Then, we apply the Cauchy-Schwarz inequality to obtain
		\begin{equation*}
			\begin{aligned}
			 |\Expect[\clR_n \mid \clF_{n-1}]| 
				&\leq
				\barpsi \| z_n - \zPO_{n-1}\|_{2,n-1}
				\\
				&\leq 
				\barpsi [\sqrt{\Upupsilon_\alpha} \| \tilz_{n-1} \|_{2,n-1}   + \sqrt{\clB_\alpha}]
                \\
				&\leq 
				\barpsi [\sqrt{\Upupsilon_\alpha} \| z_{n-1} - \zPO_{n-2}\|_{2,n-1} + \sqrt{\Upupsilon_\alpha}\barpsi +
            \sqrt{\clB_\alpha}]
			\end{aligned}
		\end{equation*}
		where the second and third bounds were obtained via \Cref{t:one_step_clip} and the triangle inequality, respectively.
		Repeating this process recursively and applying the finite geometric series formula yields the desired result.
	\end{proof}

\whamit{Proof of \Cref{t:online_tracking_result_limsup}.}
	Upon choosing $\alpha < \frac{\mu^e_\Uppsi}{2 L_\Uppsi^2(1 + L_\nu^2)}$, it follows that $\Upupsilon_\alpha \leq  \sqrt{\Upupsilon_\alpha} <1$ and $\Upupsilon_\alpha \leq 1-\mu^e_\Uppsi\alpha$. 

Expanding the square in $\|\tilz_n -\zPO_{n-1} + \zPO_{n-1}\|^2_{2,n-1}$ yields
	\[
	\begin{aligned}
		\| \tilz_n  \|^2_{2,n-1}
		&\leq
		\barpsi^2 + 
		\| z_n - \zPO_{n-1} \|^2_{2,n-1} + 2 \Expect[\clR_n \mid \clF_{n-1}]
		\\
		&\leq 
		\barpsi^2
		+
		\Upupsilon_\alpha \| \tilz_{n-1} \|^2 +  \clB_\alpha + 2 \Expect[\clR_n \mid \clF_{n-1}] 
	\end{aligned}
	\]
	where the last bound follows from  \Cref{t:one_step_clip}.

    Repeating this process recursively, taking expectations of both sides and then taking the limit supremum yields
\[
	\begin{aligned}
		\limsup_{N \to \infty} 
		\| \tilz_{N}  \|_2^2
		&\leq  \frac{ [ \clB_\alpha + \barpsi^2 ]}{\mu^e_\Uppsi \alpha} +2 \limsup_{N \to \infty} \sum_{i=0}^{N-1} \Upupsilon^i_\alpha |\Expect[\clR_{N-i-1}]| \, .
	\end{aligned}		
	\]

To bound the last term, we invoke \Cref{t:middle_track} and apply the geometric series once more to obtain
\[
\begin{aligned}
 \limsup_{N \to \infty} \sum_{i=0}^{N-1} \Upupsilon^i_\alpha |\Expect[\clR_{N-i-1}]|
  &\leq 
   \frac{(\barpsi \sqrt{\clB_\alpha} + \barpsi^2 \sqrt{\Upupsilon_\alpha})}{1-\sqrt{\Upupsilon_\alpha}}\frac{1}{\mu^e_\Uppsi\alpha}
  \\
  & \leq 
    (\barpsi \sqrt{\clB_\alpha} + \barpsi^2) \frac{2}{[\mu^e_\Uppsi]^2\alpha^2}
\end{aligned}
\]
in which the above bound was obtained by employing the fact that $\sqrt{\Upupsilon_\alpha}<1$. The proof is complete upon substituting the definition of $\clB_\alpha$ from \Cref{t:one_step_clip}.
	\hfill
	\qed

\section{Conclusions}
This paper studied OFO for constrained stochastic problems with decision-dependent distributions. A projected primal–dual algorithm with surrogate dual sets was analyzed, and mean-square tracking guarantees were established that quantify the effects of stochasticity, measurement noise, time variability, and dual-set mismatch.

%


\end{document}